\makeatletter \@addtoreset{equation}{section} \makeatother
\renewcommand\thetable{\thesection.\@arabic\c@table}
\theoremstyle{plain}
\newtheorem{theorem}{Theorem}[section]
\newtheorem{lemma}[theorem]{Lemma}
\newtheorem{proposition}[theorem]{Proposition}
\newtheorem{maintheorem}{Theorem}
\newtheorem{mainproblem}{Problem}
\theoremstyle{definition}
\newtheorem{definition}[theorem]{Definition}
\newtheorem{example}[theorem]{Example}
\newtheorem{remark}[theorem]{Remark}
\numberwithin{equation}{section}
\newcommand{\R}{\mathbb{R}}
\newcommand{\N}{\mathbb{N}}
\newcommand{\eps}{\varepsilon}
\newcommand{\rn}{r^{\frac{1}{d}}}
\newcommand{\sn}{s^{\frac{1}{d}}}
\newcommand{\wumi}{W^{1;\infty,p}(\Omega,\mathbb{R}^d)}
\newcommand{\lip}[1]{\textrm{Lip}_\lambda(#1)}
\newcommand{\lipp}[1]{\overline{\lip{#1}}^{\,p}}
\newcommand{\ml}[1]{\mathcal{M}_\lambda(#1)}
\newcommand{\mlp}[1]{\mathcal{M}^{1,p}_\lambda(#1)}
\newcommand{\dfx}[2]{\frac{\partial #1}{\partial{#2}}}
\DeclareMathOperator*{\supess}{sup\,ess}
\title[Genericity of ergodicity for Sobolev ]{Genericity of ergodicity for Sobolev homeomorphisms}
\author[A. Azevedo]{Assis Azevedo}
\address{Centro de Matem\'{a}tica, Universidade do Minho,
	Campus de Gualtar,
	4710-057 Braga, Portugal}
\email{assis@math.uminho.pt}
\author[D. Azevedo]{Davide Azevedo}
\address{Centro de Matem\'{a}tica, Universidade do Minho,
	Campus de Gualtar,
	4710-057 Braga, Portugal}
\email{davidemsa@gmail.com}
\author[M. Bessa]{M\'{a}rio Bessa}
\address{Departamento de Ciencias e Tecnologia and CMUP, Universidade Aberta,
Rua do Amial 752, 4200-055 Porto, Portugal}
\email{mario.costa@uab.pt}
\author[M. J. Torres]{Maria Joana Torres}
\address{CMAT  e Departamento de Matem\'{a}tica, Universidade do Minho,
	Campus de Gualtar,
	4710-057 Braga, Portugal}
\email{jtorres@math.uminho.pt}
\begin{document}

\subjclass{Primary 37A25, 37A05, 46E36; Secondary 37A60, 37B02, 37C20}
\keywords{Ergodicity, transitivity, generic theory, Sobolev maps}

\begin{abstract}
	In this paper we obtain a weak version of Lusin's theorem in the Sobolev-$(1,p)$  uniform closure of volume preserving Lipschitz homeomorphisms on closed and connected $d$-dimensional manifolds, $d \geq 2$ and $0<p<1$. With this result at hand we will be able to prove that the ergodic elements are generic. This establishes a version of Oxtoby and Ulam theorem for this Sobolev class. We also prove that, for $1\leq p<d-1$, the topological transitive maps are generic.
\end{abstract}

\maketitle
\section{Introduction}

\subsection{To be or not to be ... ergodic}
Back in the nineteen century and during his studies in the dynamical theory of gases, Ludwig Boltzmann formulated
a principle which is fundamental in statistical physics - \emph{the ergodic hypothesis}. This remarkable work settled the foundations of ergodic theory, an area with profound connections with analysis, probability, dynamical systems and physics. 
Roughly speaking Boltzmann's principle says that \emph{time averages} equal \emph{space
	averages} at least for typical points. \emph{Ergodicity} can be formalized by saying that an automorphism $f\colon X\rightarrow X$ on a measure space $(X,\lambda)$, which leaves a measure $\lambda$ invariant, must satisfy the following
equality
\begin{equation*}
	\underset{n\rightarrow+\infty}{\lim}\frac{1}{n}\sum_{i=0}^{n-1}\varphi(f^i(x))=\int_{X}\varphi \, d\lambda,
\end{equation*}
for $\lambda$-a.e. $x\in X$ and any continuous observable $\varphi\colon X\rightarrow
\mathbb{R}$.

Another equivalent definition of ergodicity says that any
$f$-invariant set must have zero or full $\lambda$-measure. Along this paper we will be interested in the case where $\lambda$ is the volume measure. Whenever we consider perturbations of volume preserving maps (also called \emph{conservative}), we assume that they still keep the volume invariant.  We say that a property is \emph{generic} (or \emph{residual}) if it holds in a set which contains a countable intersection of open and dense subsets. In a Baire space a generic set is dense.

Some central questions in conservative dynamics include determining whether a given map is ergodic, stably ergodic, or becomes ergodic after a small perturbation. A map is stably ergodic if ergodicity prevails even after small perturbations. The answers to these questions can vary significantly depending on the topology used to compare maps. In fact:

\begin{itemize}
	\item [(i)]  The ergodic conservative homeomorphisms are $C^0$-generic. This was proved in the celebrated paper by Oxtoby and Ulam \cite{OU}. In particular, any $C^0$ homeomorphism can be $C^0$ approximated by an ergodic one. Yet, there is no possible way to get $C^0$-open sets of ergodic homeomorphisms, since the $C^0$-topology as being really very feeble allows us to destroy ergodic maps under $C^0$-perturbations. The techniques to get $C^0$-genericity of ergodicity in \cite{OU} were pushforward by Krengel in \cite{K} to get $C^0$-genericity of zero metric entropy;
	\item [(ii)] there are examples of $C^1$-open sets of conservative diffeomorphisms, the Anosov ones, such that all the elements of the open set are ergodic with respect to volume \cite{A}. In this case, we have stable ergodicity and stable positive metric entropy. However, Anosov maps are far from being typical. Furthermore, trying to adapt the arguments in \cite{OU} to the $C^1$ topology and obtain $C^1$-genericity of ergodicity is doomed to failure because, similarly, we could also obtain $C^1$-genericity of zero metric entropy, which is false as we saw above;
	\item [(iii)] there are examples of $C^r$-open sets of diffeomorphism ($r>3$) such that there are no ergodic maps in the open set. These examples, eventually with more regularity, are known since the mid-20th century from the outstanding work by Kolmogorov, Arnold and Moser. This tuning on the regularity for $C^r$ ($r>3$) was provided by Yoccoz~\cite{Y}. These KAM examples ended Boltzmann's dream on fulfilling the ergodic hypothesis for most maps.
\end{itemize}

In the title of the Oxtoby and Ulam paper \cite{OU} ergodicity is referred as the outdated expression \emph{metrical transitivity}. This can be contextualized noticing that ergodicity displays a kind of measure indecomposability of the phase space. There is another type of indecomposability with topological flavour called \emph{topological transitivity} which means that the dynamical system displays a dense orbit in the whole manifold. Regarding the generic theory of topological transitivity we have that both:
\begin{itemize}
	\item [(iv)] $C^0$-generic conservative homeomorphisms are topological transitive \cite{AP} and
	\item [(v)] $C^1$-generic  conservative diffeomorphisms are topological transitive \cite{BC}. 
\end{itemize}
Throughout the article, $X$ is a smooth closed connected Riemannian manifold 
of dimension $d\geq 2$.

\bigskip

\subsection{The Sobolev class}
Items (i)--(iii) above reveal that there is a clear change of paradigm from $C^0$ to $C^1$ and from $C^1$ to $C^3$. We are interested in complementing the literature concerning the genericity of ergodicity, focusing on the first jump. Moreover, items (iv) and (v) above show that topological transitivity prevails generically both $C^0$ and $C^1$.   We will also extend these generic results to the following Sobolev framework.

Firstly we need to define, given an open bounded subset $\Omega$ of $\R^d$, a Baire space with a topology capturing the uniform and the $(1,p)$-Sobolev topologies ($0<p<\infty$), where we will look for the ergodic or the topologically transitive elements.  With this in mind, we consider $\wumi$, the completion of the space of the $C^1$ functions, continuous on $\overline{\Omega}$, admitting derivatives in $L^p(\Omega)$, relatively to the norm $\|\ \cdot\ \|_{1;\infty,p}=\max\left\{\|\ \cdot\ \|_\infty,\|\ \cdot\ \|_{1,p}\right\}$. Formally, an element  $f\in\wumi$ is represented by a pair $(\pi_1(f),\pi_2(f))$, with $\pi_1(f)$ being the uniform limit of a sequence and $\pi_2(f)$ the limit in $L^p(\Omega)^{d^2}$  of the sequence of their derivatives.\vskip1mm

If $p\geq 1$, then a pair in $\wumi$ is identified by its first coordinate as its derivative is the second one. With this in mind, it can be shown that $\wumi$ is formed by the continuous functions from $\overline{\Omega}$ to $\R^d$ admitting distributional derivatives in $L^p(\Omega)$. Contrarily to the $C^0$ and $C^1$ case, if $f,g\in \wumi$ and $f(\overline{\Omega})\subseteq\overline{\Omega}$, then $g\circ f$  may not belong to $\wumi$. For that reason, we consider $\lipp{\Omega}$ the closure of the conservative Lipschitz homeomorphisms. Finally, we consider $\mlp{\Omega}$ the space of the homeomorphisms in $\lipp{\Omega}$, which is a Baire space.\vskip1mm

If $0<p<1$, the situation is different as if $f\in \wumi$, $\pi_2(f)$ may not be the derivative of $\pi_1(f)$. For this reason, we have to adapt the definitions above, essentially calling a pair a bijection/conservative/Lipschitz/homeomorphism/ergodic if its first coordinate is.\vskip1mm

For full details see \S\ref{Sosobolev}, in particular \eqref{bijeccao}.

These concepts can be carried over to a smooth closed connected manifold.\vskip2mm

Our main goal is to obtain a version of Oxtoby-Ulam theorem  establishing that ergodicity is generic, for $0<p<1$ (Theorem~\ref{OU}), and that topological transitivity is also  generic, for $1\leq p<d-1$ (Theorem~\ref{Trans}). \\

We observe that Sobolev homeomorphisms have gained particular significance in various applications, such as certain types of PDEs in nonlinear elasticity (see the Ball-Evans Problem in \cite{IKO}), in ergodic theory (genericity of infinite topological entropy in \cite{FHT,FHT0}), in dynamical systems related to the closing lemma \cite{FHT,AABT2}, and in global analysis \cite{FH,AABT}.

\bigskip

\subsection{A volume preserving Sobolev Lusin theorem}

The Lusin theorem was summed up by J. E. Littlewood by saying that \emph{`every measurable function is nearly continuous'}. 

In  Theorem~\ref{crucial} we obtain a weak version of Lusin's theorem in the Sobolev-$(1,p)$  uniform closure of volume preserving Lipschitz homeomorphisms on closed and connected $d$-dimensional manifolds, $d \geq 2$ and $0<p<1$. 

This result generalizes for the Sobolev setting previous $C^0$ versions proved by Oxtoby~\cite{O1}, White~\cite{W}, Alpern and Prasad~\cite{AP} and Alpern and Edwards~\cite{AE}.

We point out that Theorem~\ref{crucial} is the crucial ingredient that allow us, in Theorem~\ref{OU}, to obtain a version of Oxtoby and Ulam theorem. 

\bigskip

Let $\mathcal{G}_\lambda(X)$ denote the set of measurable bijections on $X$ which keep $\lambda$ invariant. 

\begin{maintheorem}[Volume preserving Sobolev weak Lusin theorem]\label{crucial} 
	Let $X$ be a closed connected $d$-dimensional manifold, $d \geq 2$ and $0<p<1$.
	Let  $g \in \mathcal{G}_\lambda(X)$, $h\in\lipp{X}$ and consider $\delta>0$.
	Then given any weak topology neighbourhood $\mathcal{W}$ of $g$, there exists
	$f \in\lip{X}$ such that  $f \in \mathcal{W}$ and 
	\begin{equation*}
		\|f-\pi_1(h)\|_\infty < \|g - \pi_1(h)\|_{\infty}+\delta,\quad \|Df-\pi_2(h)\|_p < \delta.
	\end{equation*}
\end{maintheorem}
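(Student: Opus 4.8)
The plan is to reduce to the classical $C^0$ weak Lusin theorem and then to exploit that, for $0<p<1$, the $L^p$-quasinorm is insensitive to large values carried on sets of small measure. First I would remove $h$: since $h\in\lipp{X}$, there is a conservative Lipschitz (after one more approximation, even smooth) homeomorphism $k$ with $\|k-\pi_1(h)\|_\infty$ and $\|Dk-\pi_2(h)\|_p$ arbitrarily small, so after splitting $\delta$ it suffices to produce $f\in\lip{X}\cap\mathcal W$ with $\|f-k\|_\infty<\|g-k\|_\infty+\delta'$ and $\|Df-Dk\|_p<\delta'$. Then I would conjugate: put $\phi:=g\circ k^{-1}\in\mathcal G_\lambda(X)$ and look for $f=\tau\circ k$ with $\tau$ a conservative Lipschitz homeomorphism. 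As $k$ is an honest (not merely a.e.) measure-preserving homeomorphism, $\|\tau\circ k-k\|_\infty=\|\tau-\mathrm{id}\|_\infty$ and $\|g-k\|_\infty=\|\phi-\mathrm{id}\|_\infty$ exactly; right composition with $k$ is a homeomorphism of the weak topology, so $f\in\mathcal W$ iff $\tau$ lies in a corresponding weak neighbourhood $\mathcal W'$ of $\phi$; and the chain rule together with the change of variables $x\mapsto k(x)$ give $\|Df-Dk\|_p\le\|Dk\|_\infty\,\|D\tau-I\|_p$, where $\|Dk\|_\infty<\infty$ since $k$ is Lipschitz. The statement is thus reduced to: \emph{given $\phi\in\mathcal G_\lambda(X)$, a weak neighbourhood $\mathcal W'$ of $\phi$ and $\eta>0$, construct $\tau\in\lip{X}$ with $\tau\in\mathcal W'$, $\|\tau-\mathrm{id}\|_\infty<\|\phi-\mathrm{id}\|_\infty+\eta$ and $\|D\tau-I\|_p<\eta$.}

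For this I would transport the problem to a cube with Lebesgue measure (Oxtoby--Ulam) and revisit the proof --- not merely the statement --- of the classical $C^0$ weak Lusin theorem of Oxtoby, White, Alpern--Prasad and Alpern--Edwards. Applying Lusin's theorem to $\phi$ yields a compact $K$, with $\lambda(X\setminus K)$ as small as desired, on which $\phi$ is uniformly continuous; fixing a fine cubical partition, I would build a conservative Lipschitz homeomorphism $\tau$ equal, on the bulk of each cube $Q\subseteq K$, to the translation by $v_Q:=\phi(x_Q)-x_Q$ --- so $D\tau=I$ there (hence $Df=Dk$ after un-conjugating). Uniform continuity of $\phi|_K$ makes the field $Q\mapsto v_Q$ slowly varying, so the cube rearrangement it induces can be realised by a homeomorphism whose non-translational part lives only in (i) thin collars around the cube faces, where $|D\tau-I|$ is of order $(\text{oscillation of }v)/(\text{collar width})$; (ii) a small bijective correction patching the translated cubes into an honest partition; and (iii) an arbitrary conservative Lipschitz filling over $X\setminus K$. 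The pieces (ii)--(iii) have measure $o(1)$ as the cube size and $\lambda(X\setminus K)$ shrink, and, carried out through fixed-size pieces moving bounded distances, they have derivative bounded independently of the collar width. The bound $\|\tau-\mathrm{id}\|_\infty<\|\phi-\mathrm{id}\|_\infty+\eta$ holds because $|v_Q|\le\|\phi-\mathrm{id}\|_\infty+o(1)$ and the collar interpolations stay in the convex hull of the neighbouring translates.

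The single place where $0<p<1$ is used is the estimate of $\|D\tau-I\|_p$: over the collars one gets a bound of the shape $(C/\rho)^p\,(C\rho)$, with $\rho$ the collar width, which tends to $0$ as $\rho\to0$ precisely because $1-p>0$ --- this is exactly what fails for $p\ge1$, where one can only salvage transitivity (Theorem~\ref{Trans}); over (ii)--(iii) one gets a bound of the shape $\Lambda^p\,o_\eps(1)$ with $\Lambda$ independent of $\rho$. Choosing, in this order, the $\delta$-split, then $\lambda(X\setminus K)$, then the cube size, then the collar width, yields $\|D\tau-I\|_p<\eta$ and, via $f=\tau\circ k$, the theorem. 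I expect the real difficulty to be the construction of $\tau$: one must extract from the classical machinery a single homeomorphism that is at once in the prescribed weak neighbourhood with the sharp uniform estimate, an honest translation on the bulk of each cell --- so that \emph{all} the Sobolev error is borne by thin collars plus a small exceptional set rather than by a positive-measure ``mixing'' region that a wild combinatorial rearrangement would otherwise force --- and quantitatively controlled enough that the $p<1$ absorption actually closes. The remaining steps (the approximation $h\rightsquigarrow k$, the conjugation bookkeeping, the manifold/cube passage, and the use of $\|a+b\|_p^p\le\|a\|_p^p+\|b\|_p^p$) are routine.
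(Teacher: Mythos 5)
Your three-step architecture --- first approximate $h\in\lipp{X}$ by a genuine Lipschitz conservative homeomorphism $k$, then conjugate to reduce to $h=Id$, then build the approximation of a general $\phi\in\mathcal G_\lambda$ directly on the cube --- is exactly the paper's, as is the identification of $0<p<1$ as the exponent that lets large derivatives carried by thin sets have small $L^p$-quasinorm. Your conjugation bookkeeping $\|D(\tau\circ k)-Dk\|_p\le \|Dk\|_\infty\,\|D\tau-I\|_p$ (via the chain rule plus the change of variables defined by $k$), and the use of $\|a+b\|_p^p\le\|a\|_p^p+\|b\|_p^p$ when passing from $\lip{X}$ to $\lipp{X}$, are both Steps~2 and~3 of the paper verbatim. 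Where you diverge, and where I think a genuine gap remains, is in Step~1 --- the construction of the map $\tau\in\lip{I^d}$ approximating $\phi$.

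You propose to apply the classical Lusin theorem, get a compact $K$ of near-full measure on which $\phi$ is uniformly continuous, and define $\tau$ as the translation $x\mapsto x+v_Q$, $v_Q=\phi(x_Q)-x_Q$, on the bulk of each fine cube $Q$, interpolating in collars and then ``patching'' the translated cubes into a partition. The catch is that the family $\{Q+v_Q\}$ need not be a near-partition: overlaps and gaps of thickness comparable to the oscillation of $v$ across neighbouring cubes are unavoidable. More seriously, a collar interpolation of a piecewise translation with $|D\tau-I|$ of order $(\text{oscillation})/(\text{collar width})$ fails to be injective (the normal component of the Jacobian changes sign) and fails to have Jacobian determinant $\pm 1$ as soon as that ratio exceeds $1$ --- and the $\rho\to 0$ limit that your $p<1$ estimate $(C/\rho)^p\cdot C\rho=C^{p+1}\rho^{1-p}\to0$ relies on drives it there. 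You cannot blow up $|D\tau-I|$ on a thin collar by a linear interpolation and stay inside $\lip{I^d}$. This is precisely the obstruction that the paper's key perturbation result (Theorem~\ref{key:pert2}) is built to overcome: each translation of a small ball is realized by a half-rotation of angle $\pi$ supported on a pseudo-ring around an ellipse through the two centres --- rotations are volume-preserving diffeomorphisms regardless of the magnitude of their derivatives --- and the resulting map is then conjugated by the dilation of Lemma~\ref{cubinho}, which packs all the Sobolev error into the shell $\sigma_i\setminus\sigma_i^\beta$ of measure $\sim 1-\beta$, yielding the $(1-\beta)^{1-p}\to 0$ decay for $p<1$. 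Also, rather than Lusin's theorem, the paper first replaces $g$ by a dyadic permutation $\mathscr{P}$, which is a bijective piecewise translation by definition and so requires no patching; the sharp sup-norm control $\|\mathscr{P}-Id\|_\infty<\|g-Id\|_\infty+\delta/2$ then comes from the combinatorics of \cite[Theorem~6.2]{AP} rather than from $|v_Q|\le\|\phi-Id\|_\infty$. You flag ``the construction of $\tau$'' as the real difficulty, and that judgement is correct --- but as written the collar-interpolation route cannot close without an explicitly volume-preserving, automatically bijective mechanism such as the pseudo-ring rotations.
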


\medskip

\subsection{Towards to Oxtoby and Ulam theorem}

Besides its intrinsic importance in analysis, the weak Lusin theorem in the volume preserving class was crucial in a proof 
given in~\cite{AP}
of the result of Oxtoby and
Ulam, that ergodicity is generic for measure preserving homeomorphisms
of compact manifolds. Therefore, 
a Sobolev version of this theorem (cf Theorem~\ref{crucial}) will be useful in the study of dynamical properties in the Sobolev class.

Our main result is the generalization of the important classic Oxtoby and Ulam's theorem for $\mlp{X}$.

We say that an element $f\in\mlp{X}$  is \emph{ergodic} if $\pi_1(f)$ is ergodic.

\begin{maintheorem}[Sobolev Oxtoby-Ulam theorem]\label{OU}
	Let $X$ be a closed connected $d$-dimensional manifold, $d \geq 2$ and $0< p<1$. 
	The set of the ergodic elements in  $\mlp{X}$ is generic.
\end{maintheorem}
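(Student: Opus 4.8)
The plan is to follow the classical Oxtoby--Ulam strategy as adapted by Alpern and Prasad, using the weak Lusin theorem (Theorem~\ref{crucial}) as the bridge between the measure-theoretic and the topological/Sobolev worlds. The ergodic transformations in $\mathcal{G}_\lambda(X)$ form a dense $G_\delta$ set in the weak topology (a classical fact; ergodicity is a $G_\delta$ condition expressible through countably many conditions on Birkhoff sums of a countable dense family of observables, and density follows from a Rokhlin--tower/conjugacy argument). The idea is to transport this genericity across to $\mathcal{M}^{1,p}_\lambda(X)$ via the inclusion and the approximation furnished by Theorem~\ref{crucial}. Concretely, fix a countable family $\{\mathcal{U}_n\}$ of open sets in the weak topology whose intersection is exactly the ergodic transformations; I will show that for each $n$ the set $\mathcal{E}_n$ of $f\in\mathcal{M}^{1,p}_\lambda(X)$ such that $\pi_1(f)$ lies in $\mathcal{U}_n$ (more precisely, such that $\pi_1(f)$ satisfies the finitely-many Birkhoff-sum inequalities cutting out a neighbourhood inside $\mathcal{U}_n$) is open and dense in $\mathcal{M}^{1,p}_\lambda(X)$; then $\bigcap_n \mathcal{E}_n$ is the desired generic set of ergodic elements, and the Baire property of $\mathcal{M}^{1,p}_\lambda(X)$ (stated in the excerpt) finishes the argument.

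\emph{Openness.} This is the routine half. Each condition defining $\mathcal{U}_n$ involves finitely many partial Birkhoff averages of continuous observables; since convergence in $\mathcal{M}^{1,p}_\lambda(X)$ forces uniform convergence of the first coordinates $\pi_1$, and a uniform perturbation only slightly perturbs a finite-time Birkhoff sum, a transformation whose $\pi_1$ satisfies a strict inequality continues to do so after a small $\|\cdot\|_{1;\infty,p}$-perturbation. Care is needed because the conditions that genuinely characterise ergodicity are not themselves open --- this is why one passes through an auxiliary countable basis of weak-open sets and arranges $\mathcal{E}_n$ to be a genuinely open over-approximation whose intersection still collapses to the ergodic set; this bookkeeping is standard (it is exactly the ``$G_\delta$ is an intersection of open sets, and we thicken each'' device) but must be written out carefully.

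\emph{Density.} This is where Theorem~\ref{crucial} does the work. Given an arbitrary $h\in\mathcal{M}^{1,p}_\lambda(X)\subseteq\overline{\mathrm{Lip}_\lambda(X)}^{\,p}$ and $\eps>0$, I first approximate $h$ in $\|\cdot\|_{1;\infty,p}$ by a conservative Lipschitz homeomorphism, then observe that in the weak topology conjugates of an ergodic transformation are dense, so I can pick an ergodic $g\in\mathcal{G}_\lambda(X)$ arbitrarily weak-close to $\pi_1(h)$ with $\|g-\pi_1(h)\|_\infty$ as small as we like. Applying Theorem~\ref{crucial} to this $g$ and to $h$ with $\delta$ small, I obtain $f\in\mathrm{Lip}_\lambda(X)$ lying in the prescribed weak neighbourhood $\mathcal{W}$ of $g$ and satisfying $\|f-\pi_1(h)\|_\infty<\|g-\pi_1(h)\|_\infty+\delta$ and $\|Df-\pi_2(h)\|_p<\delta$, hence $f$ is $\|\cdot\|_{1;\infty,p}$-close to $h$; moreover $f\in\mathrm{Lip}_\lambda(X)\subseteq\mathcal{M}^{1,p}_\lambda(X)$. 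By choosing $\mathcal{W}$ inside the weak-open set $\mathcal{U}_n$ (using that $g$ itself is ergodic, so $g\in\mathcal{U}_n$), the resulting $f$ has $\pi_1(f)=f\in\mathcal{U}_n$, so $f\in\mathcal{E}_n$ and is close to $h$. This proves each $\mathcal{E}_n$ is dense.

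\emph{Conclusion and main obstacle.} Having established that each $\mathcal{E}_n$ is open and dense in the Baire space $\mathcal{M}^{1,p}_\lambda(X)$, the set $\bigcap_n\mathcal{E}_n$ is generic, and by construction every $f$ in it has $\pi_1(f)$ ergodic, i.e.\ $f$ is ergodic in the sense defined before the statement; this proves Theorem~\ref{OU}. The main obstacle --- and the reason Theorem~\ref{crucial} was isolated as the key ingredient --- is the density step: one needs a single transformation that is simultaneously (a) ergodic, (b) a genuine Lipschitz homeomorphism (so that it lies in $\mathcal{M}^{1,p}_\lambda(X)$ and its $\pi_1$ is literally itself), and (c) Sobolev-$(1,p)$-close to the given $h$, with the Sobolev-norm control on the derivatives being the genuinely new difficulty compared to the purely $C^0$ Oxtoby--Ulam/Alpern--Prasad setting; the weak Lusin theorem delivers exactly a transformation in the weak neighbourhood of an ergodic $g$ (hence itself ``nearly ergodic,'' and after the thickening device actually in $\mathcal{U}_n$) with the requisite $\|Df-\pi_2(h)\|_p$ bound, which is precisely what closes the gap. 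A secondary technical point to handle with care is transferring all constructions from the chart model $\mathcal{M}^{1,p}_\lambda(\Omega)$ to the manifold $X$ via a finite atlas and a partition of unity compatible with the Sobolev norms, as indicated in \S\ref{Sosobolev}.
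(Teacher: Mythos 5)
Your proposal follows a genuinely different route from the paper's. The paper invokes Alpern--Prasad's decomposition of $\ml{I^d}\setminus\mathcal{E}$ as a countable union of uniformly nowhere dense sets $\mathcal{F}_n$, and then shows each $\mathcal{G}_n=\pi_1^{-1}(\mathcal{F}_n)\cap\mlp{I^d}$ is nowhere dense in the Sobolev topology; the ingredient that makes this work is Lemma~\ref{lemma} (the Sobolev adaptation of [AP, Lemma 7.2], proved via Theorem~\ref{crucial}), which produces a Lipschitz $f$ that is $\|\cdot\|_{1;\infty,p}$-close to $h$ and carries a Rokhlin-type tower --- a compact $B\subset D_1$ with $f^i(B)$ inside $D_{i+1}$ --- and it is precisely that tower which places $f$ in a uniform ball disjoint from $\mathcal{F}_n$. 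You instead pull the weak-topology $G_\delta$ structure of the ergodics in $\mathcal{G}_\lambda(X)$ back along $\pi_1$, observe that $\pi_1$ is continuous into the weak topology (so the pullback is a $G_\delta$), and aim to prove density of each pullback directly via Theorem~\ref{crucial}. Your openness discussion is more complicated than necessary --- continuity of $\pi_1$ into the weak topology is all that is needed --- but it is not wrong.

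There is, however, a real gap in your density step. You write: ``in the weak topology conjugates of an ergodic transformation are dense, so I can pick an ergodic $g\in\mathcal{G}_\lambda(X)$ arbitrarily weak-close to $\pi_1(h)$ with $\|g-\pi_1(h)\|_\infty$ as small as we like.'' The Conjugacy Lemma gives weak density only; the weak metric $\rho$ and $\|\cdot\|_\infty$ are very different topologies, and weak-closeness yields no bound whatsoever on $\|g-\pi_1(h)\|_\infty$. Moreover, weak-closeness of $g$ to $\pi_1(h)$ is not actually what you need (the weak neighbourhood $\mathcal{W}$ you feed into Theorem~\ref{crucial} is a neighbourhood of $g$, not of $\pi_1(h)$). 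What your argument really requires is the separate fact that \emph{ergodic automorphisms are uniformly dense in $\mathcal{G}_\lambda(X)$}, since only $\|g-\pi_1(h)\|_\infty<\eps$ makes the estimate $\|f-\pi_1(h)\|_\infty<\|g-\pi_1(h)\|_\infty+\delta$ from Theorem~\ref{crucial} useful. That statement is true but needs its own proof: for instance, approximate $\pi_1(h)$ in the uniform metric by a cyclic dyadic permutation $\mathscr{P}$ of cubes $D_1,\ldots,D_N$, and replace $\mathscr{P}$ on $D_1$ by $\mathscr{P}\circ\tau$ for an ergodic automorphism $\tau$ of $D_1$; the resulting map differs from $\mathscr{P}$ only on $D_1$ (so is within $\mathrm{diam}(D_1)$ of it uniformly) and its first-return map to $D_1$ is $\tau$, so it is ergodic. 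Alternatively, since $\pi_1(h)\in\ml{X}$, one may invoke the $C^0$ Oxtoby--Ulam theorem to produce an ergodic volume-preserving homeomorphism uniformly close to $\pi_1(h)$. Without one of these additions, your density argument does not close.
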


We can not fail to mention to the reader the obligation to  look through the beautiful written historical background in \cite[\S1]{AP2} describing the trajectory of, undoubtedly, one of the most important results in dynamical systems.

\bigskip

\subsection{Genericity of topological transitivity} 
We say that a homeomorphism $f$ is \emph{topologically transitive} if given any open sets $U,V\subset X$ we have
\begin{equation*}
	f^n(U)\cap V\not=\emptyset\quad\text{for some $n\in\N$}.
\end{equation*}

Since ergodicity is stronger than topological transitivity, a direct corollary of Theorem~\ref{OU} states that the topologically transitive elements in $\lipp{X}$, $0< p<1$, form a residual set. However, the problem of genericity of ergodicity for $p\geq1$ remains unsolved. Our last result is a topological counterpart of the Oxtoby-Ulam theorem (topological transitivity instead of metric transitivity) for $\mlp{X}$, $p\geq 1$.

\begin{maintheorem}[Sobolev generic transitivity]\label{Trans}
	Let $X$ be a closed connected $d$- dimensional manifold, $d \geq 3$ and $1\leq p<d-1$. 
	The topologically transitive elements in $\mlp{X}$ form a generic set.
\end{maintheorem}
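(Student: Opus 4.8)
The plan is to realise the topologically transitive elements of $\mlp{X}$ as a dense $G_\delta$ subset, which suffices because $\mlp{X}$ is a Baire space (see \S\ref{Sosobolev}). Fix a countable basis $\{U_k\}_{k\in\N}$ of nonempty open subsets of $X$ and, for each pair $(i,j)$, set
\[
\mathcal{T}_{i,j}=\bigl\{\,f\in\mlp{X}\ :\ \pi_1(f)^{\,n}(U_i)\cap U_j\neq\emptyset\ \text{ for some }n\in\N\,\bigr\}.
\]
By the Birkhoff transitivity criterion, $f$ is topologically transitive exactly when $f\in\bigcap_{i,j}\mathcal{T}_{i,j}$, so it is enough to prove that each $\mathcal{T}_{i,j}$ is open and dense. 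Openness is routine: $f\mapsto\pi_1(f)^{\,n}$ is continuous from $\mlp{X}$ into $C^0(X,X)$, since the $W^{1;\infty,p}$ topology dominates the uniform topology on first coordinates and composition is continuous on $C^0(X,X)$ for the compact manifold $X$; and for fixed $n$ the condition $\varphi(U_i)\cap U_j\neq\emptyset$ is open in $\varphi\in C^0(X,X)$, so $\mathcal{T}_{i,j}$ is a union of open sets.

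For the density of $\mathcal{T}_{i,j}$, fix $f\in\mlp{X}$ together with a neighbourhood of it. First I would use that $\lip{X}$ is dense in $\mlp{X}$ to replace $f$ by a conservative Lipschitz homeomorphism $f_0$, so that it suffices to perturb $f_0$. The dynamical input is that $f_0$, being a volume preserving homeomorphism of a connected compact manifold, is chain transitive: it admits no nontrivial attractor--repeller pair, since a proper trapping region $U$ would force the pairwise disjoint sets $f_0^{\,k}(\overline U)\setminus f_0^{\,k+1}(\overline U)$ ($k\ge0$) to have the same positive measure inside the compact set $\overline U$, which is impossible. Hence, for every small $\delta>0$ there is a $\delta$-pseudo-orbit $z_0,z_1,\dots,z_n$ of $f_0$ with pairwise distinct points, $z_0\in U_i$, $z_n\in U_j$ and $d(f_0(z_k),z_{k+1})<\delta$ for $0\le k<n$; choosing a simple path in the reachability graph of a $\delta$-net of $X$, one may in addition require $n\le C_X\,\delta^{-d}$.

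Then I would close this pseudo-orbit into a genuine orbit of a small perturbation $g=\psi\circ f_0$ of $f_0$. For each $k$, fix an embedded solid tube $T_k$ of length of order $\delta$ and cross-sectional radius $\rho\ll\delta$ joining $f_0(z_k)$ to $z_{k+1}$, with the tubes pairwise disjoint (their total volume is of order $n\,\delta\,\rho^{\,d-1}\to0$, and $d\ge3$ makes it possible to route them without crossings), together with a conservative Lipschitz homeomorphism $\psi_k$ equal to the identity outside $T_k$ that drags $f_0(z_k)$ along the core of $T_k$ to $z_{k+1}$. For such a ``drag along a thin tube'' the relevant shear has size of order $\delta/\rho$ on a set of volume of order $\delta\,\rho^{\,d-1}$, which gives $\|\psi_k-\mathrm{id}\|_\infty\le C\delta$ and $\|D\psi_k-\mathrm{Id}\|_p^{p}\le C(\delta/\rho)^{p}\,\delta\,\rho^{\,d-1}=C\,\delta^{\,p+1}\rho^{\,d-1-p}$. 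Putting $\psi:=\psi_{n-1}\circ\cdots\circ\psi_0$ and $g:=\psi\circ f_0$, disjointness of the tubes yields $\|\psi-\mathrm{id}\|_\infty\le C\delta$ and $\|D\psi-\mathrm{Id}\|_p^{p}=\sum_k\|D\psi_k-\mathrm{Id}\|_p^{p}\le C\,n\,\delta^{\,p+1}\rho^{\,d-1-p}$; moreover $g\in\lip{X}\subset\mlp{X}$ and $g(z_k)=z_{k+1}$, so $z_n\in g^{\,n}(U_i)\cap U_j$ and $g\in\mathcal{T}_{i,j}$. Since $f_0$ is $\Lambda$-Lipschitz and volume preserving, a change of variables gives $\|(D\psi-\mathrm{Id})\circ f_0\|_p=\|D\psi-\mathrm{Id}\|_p$, hence
\[
\|g-f_0\|_\infty\le\|\psi-\mathrm{id}\|_\infty\le C\delta,\qquad \|Dg-Df_0\|_p\le\Lambda\|D\psi-\mathrm{Id}\|_p\le\Lambda\bigl(C\,C_X\,\delta^{\,p+1-d}\rho^{\,d-1-p}\bigr)^{1/p}.
\]
This is exactly where $1\le p<d-1$ is used: because $d-1-p>0$ one may take, say, $\rho=\delta^{2}$, making the last exponent of $\delta$ equal to $(d-1-p)/p>0$, so that both bounds tend to $0$ as $\delta\to0^{+}$ and $g$ ends up in the prescribed neighbourhood of $f_0$. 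This proves density, and the theorem follows.

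The step I expect to be the main obstacle is the quantitative perturbation of the last paragraph: constructing conservative homeomorphisms $\psi_k$ that realise an essentially arbitrary pseudo-orbit --- whose length may be as large as $\sim\delta^{-d}$ --- while controlling the accumulated $W^{1,p}$-cost. It is precisely the competition between the number $n\sim\delta^{-d}$ of local surgeries and the cost $\sim\delta^{\,p+1}\rho^{\,d-1-p}$ of a single thin-tube drag that pins down the hypothesis $1\le p<d-1$, since for $p\ge d-1$ shrinking $\rho$ no longer helps and this scheme ceases to produce a $W^{1,p}$-small perturbation.
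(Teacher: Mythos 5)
Your overall strategy matches the paper's: write the transitive maps as $\bigcap_{i,j}\mathcal{T}_{i,j}$, observe that each $\mathcal{T}_{i,j}$ is open because the $W^{1;\infty,p}$ topology dominates the uniform one, reduce density to perturbing a Lipschitz conservative homeomorphism $f_0$, find a long chain of points from $U_i$ to $U_j$, and close it by post-composing $f_0$ with a conservative Lipschitz perturbation supported on disjoint thin tubes, exploiting that the accumulated $L^p$ cost of the shears scales like $n\,\delta^{p+1}\rho^{d-1-p}$ and hence can be made small exactly when $p<d-1$. The paper produces the chain differently and more economically: it approximates $f$ uniformly by a \emph{cyclic dyadic permutation} $\mathscr{P}$ of a fine dyadic decomposition (Alpern--Prasad, \cite[Theorem 3.3]{AP}), whose cycle through the cube centres \emph{is} the chain, which avoids any appeal to chain transitivity and handles all pairs $(A,B)$ of sufficiently coarse dyadic cubes with one perturbation. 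The closing perturbation is supplied by Theorem~\ref{key:pert2}, which translates small balls along disjoint pseudo-rings (Lemmas~\ref{rotacao1}--\ref{rotacao2}, Proposition~\ref{translacao}) with a $W^{1,p}$ estimate driven by the pseudo-ring volume \eqref{volumePT}. Your scaling analysis and the role you assign to $p<d-1$ agree with the paper's, and you correctly flag this perturbation lemma as ``the main obstacle''.

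That obstacle is a genuine gap in your write-up: the ``thin-tube drag'' $\psi_k$ --- a conservative Lipschitz homeomorphism supported on a tube of length of order $\delta$ and radius $\rho$, translating one endpoint to the other with Lipschitz constant of order $\delta/\rho$ --- together with the disjoint routing of order $\delta^{-d}$ such tubes when $d\ge 3$, is precisely the content of Theorem~\ref{key:pert2} and its supporting lemmas; asserting its existence does not finish the density argument. A second, smaller gap is your justification of chain transitivity, which is measure-theoretically wrong as stated: since $f_0$ preserves $\lambda$, one has $\lambda\bigl(f_0^{k}(\overline U)\bigr)=\lambda(\overline U)$ for every $k$, so the difference sets $f_0^{k}(\overline U)\setminus f_0^{k+1}(\overline U)$ have measure \emph{zero}, not the same positive measure, and no contradiction arises along those lines. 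The conclusion (no proper attractor for a conservative homeomorphism of a connected compact manifold) is correct, but the repair requires either Poincar\'e recurrence applied to a wandering neighbourhood in the basin minus the attractor, or the observation that $\lambda\bigl(\mathrm{int}(U)\setminus f_0(\overline U)\bigr)=0$ forces the open set $\mathrm{int}(U)\setminus f_0(\overline U)$ to be empty, whence $f_0(\overline U)=\mathrm{int}(U)$ is simultaneously compact and open, contradicting connectedness when $U$ is proper. The paper's route via cyclic dyadic permutations sidesteps this issue entirely.
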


\bigskip

\subsection{Open problems}
We end the introduction by considering several important open questions. In \cite{FHT0}, the early 1980s result by Yano \cite{Ya} was generalized for Sobolev homeomorphisms. We ask if an analogous result holds in the conservative class posing the following:
\begin{mainproblem}
	Is infinite topological entropy generic in $\mlp{X}$ for $p>0$?
\end{mainproblem}

The seminal ideas from Oxtoby and Ulam paper \cite{OU} on periodic approximations can be used to obtain $C^0$-genericity of zero metric entropy (see \cite{K,R,CP}). With this in mind we consider the following:
\begin{mainproblem}
	Is zero metric entropy generic in $\mlp{X}$ for $p>0$?
\end{mainproblem}
An affirmative answer to previous problems would allow us to conclude that the volume measure could not play an effective role on fulfilling an (hypothetical) variational principle for a generic set of Sobolev conservative homeomorphisms.\\

Finally, we pose what we believe to be the most important open question:
\begin{mainproblem}
	Is ergodicity generic in $\mlp{X}$ for $p\geq1$?
\end{mainproblem}

This paper is organized as follows. In Section~\ref{pre} we introduce the space of automorphisms and the Sobolev-$(1,p)$ framework in which we will work ($0<p<\infty$). In Section~\ref{PST} we prove a key perturbation result (Theorem \ref{key:pert2}) which will be crucial to prove the theorems of the following sections.
In Section~\ref{PLusin} we obtain a weak version of Lusin's theorem for $p<1$ (Theorem~\ref{crucial}). In Section~\ref{PMR}, we prove the Sobolev Oxtoby-Ulam theorem (Theorem~\ref{OU}) with respect to the same topology. In  Section~\ref{PTT} we prove the genericity of topological transitivity (Theorem~\ref{Trans}) for $1\leq p<d-1$.
Finally, in Section \ref{example} we construct an explicit example showing that the space $\mlp{X}$ is not a  functional one.

\section{Preliminaries}\label{pre}

We recall that $X$ is a smooth closed connected Riemannian manifold 
of dimension $d$ and $d_X$ is the distance on $X$ induced by the Riemannian structure.
We denote the Euclidean norm in $\R^d$ by $|\cdot|$.
Let $\lambda$ stand for the measure on both $X$ and $\R^d$ derived from the standard volume form.

\subsection{Automorphisms and homeomorphisms of $(X,\lambda)$} 

An {\em automorphism} of the underlying Borel measure space 
$(X,\lambda)$ is a bijection $g \colon X \to X$ such that $g$ and, consequently,  $g^{-1}$ are measurable functions and 
$\lambda(B)=\lambda(g^{-1}(B))$
for all measurable sets $B$.
Automorphisms which differ on a set of measure zero will be identified.
We  denote by $\mathcal{G}_\lambda(X)$ the space of automorphisms of $(X,\lambda)$. 
We shall consider two topologies on $\mathcal{G}_\lambda(X)$. The first one is the 
{\em weak topology} which is determined by the metric 
\begin{equation*}
	\rho(f,g)=\inf\{\delta\colon \lambda\{x\colon d_X(f(x),g(x))\geq \delta\}<\delta\}.
\end{equation*}

Define also the {\em uniform topology} by the metric 
\begin{equation*}
	\|f-g\|_{\infty}= \supess_{x \in X} d_X(f(x),g(x)).
\end{equation*}

The space $\mathcal{G}_\lambda(X)$  is 
topologically complete with the weak topology (see~\cite{H,AP}) and complete  with the uniform topology. Thus, with each of these topologies,  $\mathcal{G}_\lambda(X)$ is a Baire space. Moreover, $\mathcal{G}_\lambda(X)$ endowed with the weak topology is a topological group (cf \cite[pp. 39]{AP}).

We denote by $\ml{X}$ the subspace of all homeomorphisms in $\mathcal{G}_\lambda(X)$, 
endowed with the uniform topology. 
This space is topologically complete (see~\cite{O}), thus a Baire space.
We shall call {\em volume preserving homeomorphisms} of $X$ the elements in 
$\ml{X}$. For $f\in \ml{X}$ and $\delta>0$ we shall denote by $B^\infty(f,\delta)$ the \emph{uniform ball} centred in $f$ and with radius $\delta$.

\bigskip

\subsection{Sobolev maps}\label{Sosobolev} Let $\Omega$ be an open bounded subset of $\R^d$ and  $0<p <\infty$. It is well known that $L^p(\Omega)$ endowed with its natural norm or quasi-norm for  $1\leq p< \infty$ or $0<p<1$, respectively, defined by 
\begin{equation*}
	\| f\|_p=\left(\int_\Omega|f|^pd\lambda\right)^\frac{1}{p}
\end{equation*}
is a complete space.

If $0<p<1$, we have the inequality
\begin{equation}\label{pq}
	\forall f,g\in L^p(\Omega)\quad \|f+g\|^p_p\leq \|f\|^p_p+\|g\|^p_p,
\end{equation}
proving that  $L^p(\Omega)$ is a metric space. Contrarily to the case where $1\leq p<\infty$, if $0<p<1$, $L^p(\Omega)$ is neither a normable nor a locally convex space, from where one can obtain that its dual is the trivial space.

Of course, if $\Omega$ is bounded and $0<p<q<\infty$ then  $L^q(\Omega)$ is continuously included in $L^p(\Omega)$. In fact, if $f\in L^q(\Omega)$ we have, using the H\"older inequality,
\begin{equation}\label{pqmenor}
	\|f\|_p\leq
	\lambda(\Omega)^{\frac{q-p}{pq}}\|f\|_q.
\end{equation}

For $1\leq p<\infty$ there are (usually) two equivalent ways to define the Sobolev spaces $W^{n,p}(\Omega)$  \cite{H=W}): one using a completion of $C^n(\Omega)$ and other using weak derivatives of functions in $L^p(\Omega)$. If $0<p<1$, only the first one has a sense because, for those $p$, a function in $L^p(\Omega)$ may not be locally integrable. 

For $0<p<1$ the completion $W^{n,p}(\Omega)$ has a strange behaviour. For simplicity, let us look at the case where $n=d=1$,  $\Omega$ being an interval. Peetre \cite{peetre} proved that for all $g,h\in L^p(\Omega)$ there exists an element of $W^{1,p}(\Omega)$ represented by a sequence $(f_n)_n$ such that
\begin{equation*}
	\lim_n f_n=g, \quad\lim_nf'_n=h.
\end{equation*}

That is, in this case, the elements of $W^{1,p}(\Omega)$ are represented by pairs $(g,h)\in L^p(\Omega)\times L^p(\Omega)$, where $h$ can be any function in $L^p(\Omega)$ and not necessarily the weak derivative of $g$, as happens if $p\geq 1$. 

Just to give an idea of what is possible in this space, we present an explicit example (adapted  from \cite{peetre}) of  this situation.

\begin{example} \label{peetre2}
	For $n\in\N$, let $a,b>0$ with $a+b=\frac{1}{n}$ and consider $f_n:[0,\frac{1}{n}]\rightarrow [0,\frac{1}{n}]$ defined by	
	\begin{equation*}
		f_n(x)=
		\left\{\begin{array}{ll}
			\frac{b}{a}\,x & \text{if $x\leq a$}\\[2mm]
			\frac{a}{b}\big(x-\frac{1}{n}\big)+\frac{1}{n} & \text{if $x> a$.}
		\end{array}\right.
	\end{equation*}

	We extend $f_n$ to a function from $[0,1]$ to $[0,1]$ defining $f_n(\frac{k}{n}+x)=f_n(x) +\frac{k}{n}$, if $x\in \left]\frac{k}{n},\frac{k+1}{n}\right]$, with $k\in\{0,1\ldots,n-1\}$ (see Figure \ref{figura}). Notice that these functions are Lipschitz homeomorphisms.

	\begin{figure}[htb]
		\centering	\begin{tikzpicture}[xscale=0.8,yscale=0.8]
			\begin{axis}[
				axis lines = left,
				]
				\addplot [ultra thick,
				domain=0:1/25, 
				samples=10, 
				color=blue,
				]
				{4*x};
				\addplot [ultra thick,
				domain=1/25:1/5, 
				samples=10, 
				color=blue,
				]
				{1/4*(x-1/5)+1/5};
				\addplot [ultra thick,
				domain=0:1, 
				samples=100, 
				color=red,
				]
				{x};
				\addplot [ultra thick,
				domain=1/5:6/25, 
				samples=10, 
				color=blue,
				]
				{4*(x-1/5)+1/5};
				\addplot [ultra thick,
				domain=6/25:2/5, 
				samples=10, 
				color=blue,
				]
				{1/4*(x-2/5)+2/5};
				\addplot [ultra thick,
				domain=2/5:11/25, 
				samples=10, 
				color=blue,
				]
				{4*(x-2/5)+2/5};
				\addplot [ultra thick,
				domain=11/25:3/5, 
				samples=10, 
				color=blue,
				]
				{1/4*(x-3/5)+3/5};
				\addplot [ultra thick,
				domain=3/5:16/25, 
				samples=10, 
				color=blue,
				]
				{4*(x-3/5)+3/5};
				\addplot [ultra thick,
				domain=16/25:4/5, 
				samples=10, 
				color=blue,
				]
				{1/4*(x-4/5)+4/5};
				\addplot [ultra thick,
				domain=4/5:21/25, 
				samples=10, 
				color=blue,
				]
				{4*(x-4/5)+4/5};
				\addplot [ultra thick,
				domain=21/25:5/5, 
				samples=10, 
				color=blue,
				]
				{1/4*(x-5/5)+5/5};
				\addplot[color = black, dashed, thick] coordinates {(1/5, 0) (1/5,1/5)};
				\addplot[color = black, dashed, thick] coordinates {(2/5, 0) (2/5,2/5)};
				\addplot[color = black, dashed, thick] coordinates {(3/5, 0) (3/5,3/5)};
				\addplot[color = black, dashed, thick] coordinates {(4/5, 0) (4/5,4/5)};
				\addplot[color = black, dashed, thick] coordinates {(1, 0) (1,1)};
			\end{axis}
		\end{tikzpicture}
		\caption{Graphic of $f_5$ (with $a=\frac{1}{25}$) and of the identity.}
		\label{figura}
	\end{figure}
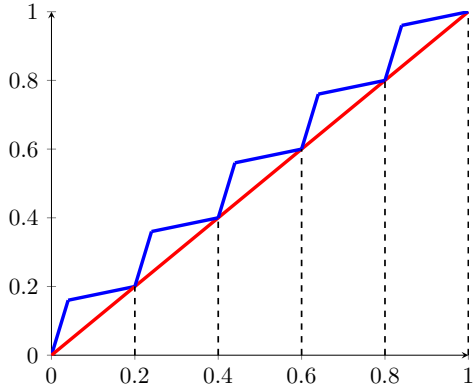

	Let $I$ be the identity function in $[0,1]$. Then $\|f_n-I\|_\infty\leq \frac{1}{n}$, proving that $ (f_n)_n$ converges uniformly and in all $L^p(I)$, for $0<p<\infty$. If we fix $0<p<1$, then 
	\begin{equation*}
		\|f'_n\|^p_p=n\int_0^{\frac{1}{n}}|f'_n(x)|^pdx=n\left(a^{1-p}b^p+b^{1-p}a^p\right)
		=(an)^{1-p}(bn)^p+(bn)^{1-p}(an)^p.
	\end{equation*}
	If we choose $a$ such that $\lim_n an=0$ (or $\lim_nbn=0$) then $(f_n)_n$ converges in $W^{1,p}(I)$ to an element $f$ represented by the pair $(I,0)$.
\end{example}

In the present paper all the functions we will be interested in are volume preserving homeomorphisms from $\overline{\Omega}$ to $\overline{\Omega}$, whose components are in $W^{1,p}(\Omega)$.

With this in mind, we start with the vectorial space
\begin{equation}\label{base}
	\left\{f=(f_1,\ldots,f_d)\in C^1(\Omega)^d\cap C^0(\overline{\Omega})^d: \tfrac{\partial f_i}{\partial x_j}\in L^p(\Omega), \text{ for } i,j=1,\ldots,d\right\}
\end{equation}
with the norm or quasi-norm, for  $1\leq p<\infty$ or $0<p<1$, respectively, defined by
\begin{equation}\label{dwf}
	\|f\|_{1;\infty,p}=\max\{\|f\|_{\infty},\|Df\|_{p}\},
\end{equation}
where $\|f\|_\infty=\max_i\|f_i\|_\infty$ and  $\|Df\|_{p}=\max_{i,j}\left\|\dfx{f_i}{x_j}\right\|_p$.

Notice that we do not need to include $\|f\|_p$ in \eqref{dwf}, as it is superfluous.

\begin{definition}
	If $\Omega$ is an open bounded subset of $\R^d$ and $0<p<\infty$, we denote by $\wumi$ the completion of the space defined in \eqref{base} relatively to the norm $\|\ \cdot\ \|_{1;\infty,p}$.
\end{definition}
As in any completion, if $f\in \wumi$ we denote by $\|f\|_{1;\infty,p}$ the limit of the sequence $(\|f_n\|_{1;\infty,p})_n$, where $(f_n)_n$ is a representative Cauchy sequence of $f$.

\begin{remark}\label{inclusion}
	Notice that, using \eqref{pqmenor}, if $0<p<q<\infty$ then 
	$W^{1;\infty,q}(\Omega,\mathbb{R}^d)$ is continuously included in $\wumi$.
\end{remark}

An element of $f\in \wumi$ is represented by a Cauchy sequence $(f_n)_n$  relatively to the norm defined in \eqref{dwf}.  Thus, the sequences $(f_n)_n$ and $(D f_n)_n$ are Cauchy (and then convergent) sequences in $C^0(\overline{\Omega})^d$ and  
$\left(L^p(\Omega)\right)^{d^2}$, respectively. Then we have a linear continuous inclusion
\begin{equation}\label{bijeccao}
	\pi=(\pi_1,\pi_2):\wumi\longrightarrow C^0(\overline{\Omega})^d\times \left(L^p(\Omega)\right)^{d^2}
\end{equation} 
As noticed before, by \cite{H=W}, if $p\geq 1$, then $\pi_2(f)$ is the weak derivative of $\pi_1(f)$ and then, $f$ will be represented only by $\pi_1(f)$. Furthermore, if $u\in L^p(\Omega)^d$ admitting weak partial derivatives in $L^p(\Omega)$, then, not only there exists a sequence of $C^\infty$ functions converging to $u$ in $W^{1,p}(\Omega,\R^d)$, but also in $C^0(\overline{\Omega})^d$, if $u\in C^0(\overline{\Omega})^d$. Then
\begin{equation}\label{wlip}
	\wumi=\left\{f=(f_1,\ldots,f_d)\in  C^0(\overline{\Omega})^d: \tfrac{\partial f_i}{\partial x_j}\in L^p(\Omega), \text{ for } i,j=1,\ldots,d\right\},
\end{equation}
were the derivatives are weak ones.

\begin{example}\label{Id0}
	If $f_n$ is defined as in Example \ref{peetre2} and 
	\begin{equation*}
		\begin{array}{rclc}
			F_n:&	[0,1]^d & \longrightarrow & [0,1]^d\\
			& 	(x_1,\ldots,x_d) & \longmapsto & \big(f_n(x_1),\ldots,f_n(x_d)\big)
		\end{array}
	\end{equation*}
	then $(F_n)_n$ is a sequence of homeomorphisms converging uniformly to the identity $I$ and  $\left(DF_n)\right)_n$ converges to the null function in $L^p(I^d)^{d^2}$. So the limit of $(F_n)_n$ in $W^{1;\infty,p}(\Omega,\R^d)$ is represented by the pair $(I,0)$ and, of course the second component is not the derivative of the first one.
\end{example} 

Using \eqref{wlip} and the Remark \ref{inclusion} it follows that the Lipschitz functions from $\overline{\Omega}$ to $\overline{\Omega}$ belong to $\wumi$  for every $0<p<\infty$. Consequently, the following definition make sense.

{
	
	\begin{definition} If $\Omega$ is a bounded open subset of $\R^d$ and $0<p<\infty$, let $\lip{\Omega}$ be the subspace of $\wumi$ formed by the volume preserving Lipschitz homeomorphisms from $\overline{\Omega}$ to $\overline{\Omega}$. The closure in $\wumi$ of $\lip{\Omega}$ will be denoted by $\lipp{\Omega}$.
	\end{definition}

	\begin{remark}\label{toto}
		If $f\in \lipp{\Omega}$ then $\pi_1(f)\in C^0_\lambda(\overline{\Omega})$, the set of volume preserving functions, but may not be an homeomorphism. But if $\pi_1(f)$ it is injective, then it is a homeomorphism.
	\end{remark}
	
	This justifies the definition of the Baire space we are going to work with.

	\begin{definition}
		If $\Omega$ is a bounded open subset of $\R^d$ and $0<p<\infty$, we denote by $\mlp{\Omega}$ the subspace of $\lipp{\Omega}$, 
		\begin{equation*}
			\left\{f\in \lipp{\Omega}: \pi_1(f)\in \ml{\overline{\Omega}}\right\}.
		\end{equation*}
	\end{definition}

	It is clear that, if we have an element $f=(g,h)\in\mlp{\Omega}$ then, viewing $h$ as a matrix $d\times d$, the  determinant of $h$ is, almost everywhere, equal to  $\pm 1$. This is a consequence of the fact that the $L^p$ convergence implies almost everywhere convergence and that $g$ is the limit in $L^p(\Omega)^{d^2}$ of a sequence in $\lip{\Omega}$. In particular the element $(I,0)$ defined in Example \ref{Id0} does not belong to $\mlp{\Omega}$.
	
	It is natural to ask if $\mlp{I^d}$ contains any element   $f=(g,h)\in\mlp{I^d}$ such that $h$ is not the weak derivative of $g$. Of course the question only makes sense if $0<p<1$. In Section \ref{example}, we answered positively to this question using techniques that will be developed in the following sections.

	\begin{proposition} If $\Omega$ is a bounded open subset of $\R^d$ and $0<p<\infty$ then $\mlp{\Omega}$ is a Baire space.
	\end{proposition}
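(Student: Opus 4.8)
\emph{Proof plan.} The plan is to exhibit $\mlp{\Omega}$ as a $G_\delta$ subset of a complete metric space, so that topological completeness — and hence the Baire property — follows from the classical theorem of Alexandrov that a $G_\delta$ subspace of a completely metrizable space is completely metrizable.

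First I would note that $\wumi$ is complete, being a completion, and therefore its closed subspace $\lipp{\Omega}$ is complete as well; so it suffices to show that $\mlp{\Omega}$ is a $G_\delta$ in $\lipp{\Omega}$. Next, by Remark~\ref{toto}, for $f\in\lipp{\Omega}$ the map $\pi_1(f)$ lies in $\ml{\overline{\Omega}}$ exactly when it is injective (a continuous, volume preserving, injective self-map of the compact set $\overline{\Omega}$ is automatically onto, hence a homeomorphism, with volume preserving inverse). Thus
\begin{equation*}
\mlp{\Omega}=\bigl\{f\in\lipp{\Omega}:\ \pi_1(f)\ \text{is injective}\bigr\}.
\end{equation*}

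To see that the right-hand side is a $G_\delta$, I would encode injectivity through a countable family of moduli. For $n\in\N$ set
\begin{equation*}
\alpha_n(f)=\inf\bigl\{\,|\pi_1(f)(x)-\pi_1(f)(y)|\ :\ x,y\in\overline{\Omega},\ |x-y|\geq \tfrac1n\,\bigr\},
\end{equation*}
with the harmless convention $\inf\emptyset=+\infty$. Then $\pi_1(f)$ is injective if and only if $\alpha_n(f)>0$ for every $n$. Since convergence in $\wumi$ forces uniform convergence of the first components, $f\mapsto\pi_1(f)$ is continuous from $\lipp{\Omega}$ into $C^0(\overline{\Omega})^d$ with the uniform norm, and the triangle inequality shows that $\alpha_n$ is Lipschitz — hence continuous — with respect to $\|\pi_1(\cdot)-\pi_1(\cdot)\|_\infty$. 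Consequently each $O_n:=\{f\in\lipp{\Omega}:\alpha_n(f)>0\}$ is open, and
\begin{equation*}
\mlp{\Omega}=\bigcap_{n\in\N}O_n
\end{equation*}
is a $G_\delta$, which finishes the argument.

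The verifications involved — continuity of $\pi_1$, continuity of the $\alpha_n$, and the invocation of Alexandrov's theorem — are routine, so there is no serious obstacle; the only step that deserves a moment's thought is the identification $\mlp{\Omega}=\{f:\pi_1(f)\ \text{injective}\}$, that is, that injectivity automatically upgrades $\pi_1(f)$ to a volume preserving homeomorphism of $\overline{\Omega}$, which is exactly the content of Remark~\ref{toto}. In spirit the proposition merely says that ``being a homeomorphism'' is a $G_\delta$ condition, pulled back along the continuous projection $\pi_1$.
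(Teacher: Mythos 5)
Your argument is correct and essentially identical to the paper's: the sets $O_n=\{f:\alpha_n(f)>0\}$ are, by compactness of $\overline{\Omega}$, exactly the paper's sets $X_n$, and showing them open via the $2$-Lipschitz modulus $\alpha_n$ is the same computation the paper carries out directly with the triangle inequality. The only cosmetic difference is that you package the openness proof through the auxiliary function $\alpha_n$ and invoke Alexandrov's theorem by name, whereas the paper leaves both implicit.
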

	\begin{proof} Let 
		\begin{equation*}
			X_n=\left\{f\in\lipp{\Omega}: \forall x,y\in\overline{\Omega}\quad \big[|x-y|\geq \tfrac{1}{n}\ \Rightarrow\ \pi_1(f)(x)\neq \pi_1(f)(y)\big]\right\}.
		\end{equation*}
		Using Remark \ref{toto}, it is clear that 	$\mlp{\Omega}=\cap_n X_n$. To conclude the proof it is enough to show that, for $n\in\N$, $X_n$ is an open subset of $\lipp{X}$, which is a complete space.\vskip2mm
		
		Given $f\in X_n$ let
		\begin{equation*}
			\delta=\min\left\{\left|\pi_1(f)(x)-\pi_1(f)(y)\right|:|x-y|\geq\tfrac{1}{n}|\right\}.
		\end{equation*} 
		
		As $\overline{\Omega}$ is compact, then $\delta> 0$. Let us see that, if $g\in\lipp{\Omega}$ and $\|f-g\|_{1;\infty,p}<\frac{\delta}{2}$ then $g\in X_n$. In fact, if $x,y\in\lipp{\Omega}$ and  $|x-y|\geq\frac{1}{n}$ then, as  $\|f-g\|_\infty\leq \|f-g\|_{1;\infty,p}$,
		\begin{align*}
			\delta\leq |\pi_1(f)(x)-\pi_1(f)(y)| \leq &  |\pi_1(f)(x)-\pi_1(g)(x)|+  |\pi_1(g)(x)-\pi_1(g)(y)|+ \\
			&   |\pi_1(g)(y)-\pi_1(f)(y)|\\
			< &
			\delta+|\pi_1(g)(x)-\pi_1(g)(y)|,
		\end{align*}
		from where we conclude that $\pi_1(g)(x)\neq \pi_1(g)(y)$ and then $g\in X_n$.
	\end{proof}

	\medskip
	
	All these concepts can be carried over to a smooth closed connected manifold. 

\section{A key perturbation theorem}\label{PST}

The next theorem will serve as a cornerstone result essential for establishing a volume preserving Sobolev weak Lusin theorem, and the generic results of the following sections. Here, $I^d=[0,1]^d$ denotes the $d$-dimensional unit cube.

\begin{theorem}\label{key:pert2}
	Let $d \geq 2$, $0<p<d-1$, $N\in\N$, and consider $ \{P_i\}_{i=1}^N$ and $\{Q_i\}_{i=1}^N$ two sets of $N$ distinct interior points of $I^d$.
	
	For sufficiently small $r>0$, there is $F$, a volume preserving $C^{\infty}$ diffeomorphism of $I^d$, equal to the identity outside $I^d$ and in a neighbourhood of the boundary of $I^d$, which sends by translation a ball of radius $r$ centred in $P_i$ onto a ball centred in $Q_i$. 
	
	Moreover, if $\delta>0$, we can choose $r$ such that  	
	\begin{equation*}
		\|F-Id\|_\infty < \max_i|P_1-Q_i|+\delta,\qquad  \left\|F-Id\right\|_{1,p}<\delta.
	\end{equation*}
\end{theorem}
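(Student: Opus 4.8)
The plan is to move each ball from $P_i$ to $Q_i$ independently, along a thin tube around an arc joining the two points, by a volume preserving flow that is the identity outside the tube, and then to compose these $N$ perturbations, which have disjoint supports. The only delicate point is the $W^{1,p}$ estimate, and it is precisely there that the hypothesis $p<d-1$ enters.

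\emph{Step 1 (disjoint tubes).} Since $d\ge 2$, choose pairwise disjoint smooth embedded arcs $\gamma_1,\dots,\gamma_N$ in the interior of $I^d$, at positive distance from $\partial I^d$, with $\gamma_i$ joining $P_i$ to $Q_i$; one produces them one at a time, using that the complement, in the open cube, of finitely many pairwise disjoint arcs together with finitely many points is still path connected. Fix $R>0$ so small that the closed $R$-neighbourhoods $T_1,\dots,T_N$ of the arcs are pairwise disjoint and contained in the interior of $I^d$, and take $0<r<R$ (eventually $R$ will be a fixed small multiple of $r$, so that the single parameter $r$ governs everything). It suffices to build, for each $i$, a volume preserving $C^\infty$ diffeomorphism $G_i$ of $I^d$, supported in $T_i$ and equal to the translation $x\mapsto x+(Q_i-P_i)$ on the ball of radius $r$ about $P_i$; then $F:=G_1\circ\cdots\circ G_N$ is the desired map, for the $G_i$ have disjoint supports, so $F$ is a volume preserving $C^\infty$ diffeomorphism equal to $Id$ near $\partial I^d$ and outside $I^d$, and — all relevant norms being assembled from $L^p$ integrals over the disjoint tubes — it is enough to bound $\|G_i-Id\|_\infty$ and $\|G_i-Id\|_{1,p}$ for each $i$.

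\emph{Step 2 (transport along one tube).} Fix $i$, put $v=Q_i-P_i$, and parametrise $\gamma_i$ by $c\colon[0,1]\to\gamma_i$ with $c(0)=P_i,\ c(1)=Q_i$. Choose a smooth time-dependent divergence free vector field $V_t$ on $\R^d$, supported in $T_i$ and vanishing near $\partial T_i$, of the form $V_t=\dot c(t)\,\chi_t+W_t$, where $\chi_t$ is a smooth cut-off equal to $1$ on a neighbourhood of the moving ball $B(c(t),r)$ and equal to $0$ off a slightly larger subset of $T_i$, and $W_t$ is a divergence correction supported where $\nabla\chi_t\neq0$ (a shell of transverse width of order $R$ about $B(c(t),r)$), obtained by solving $\operatorname{div}W_t=-\dot c(t)\cdot\nabla\chi_t$ with zero boundary data — solvable since the right-hand side has zero mean over the relevant cross sections — and chosen with $\|W_t\|_\infty\le C|\dot c(t)|$ and $\|DW_t\|_\infty\le C|\dot c(t)|/R$. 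Let $G_i$ be the time-one map of the flow of $V_t$. Since $V_t$ equals the constant $\dot c(t)$ on a neighbourhood of $B(c(t),r)$ and the ball stays in that neighbourhood throughout the flow, $G_i$ is the translation by $\int_0^1\dot c=v$ on $B(P_i,r)$; and, being the time-one map of a smooth compactly supported divergence free field, $G_i$ is a volume preserving $C^\infty$ diffeomorphism supported in $T_i$.

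\emph{Step 3 (estimates).} Since $G_i=Id$ off $T_i$, and away from $B(P_i,r)$ and its image $B(Q_i,r)$ the flow merely rearranges matter within the tube (the correction $W_t$ circulating it around the ball rather than dragging it ahead), one checks from the construction that no point is displaced by more than $\max_i|P_1-Q_i|+CR$; hence $\|F-Id\|_\infty<\max_i|P_1-Q_i|+\delta$ once $R$ is small. For the Sobolev part, $DG_i-Id$ is supported in $T_i$, whose Lebesgue measure is of order $\operatorname{length}(\gamma_i)\,R^{d-1}$, and on $T_i$ its modulus is of order at most $R^{-1}$, because the displacement passes from the value $v$ near the ball to $0$ near $\partial T_i$ over a transverse distance of order $R$ (and $|v|\le\sqrt d$); hence
\begin{equation*}
\|DG_i-Id\|_{p}^{p}\ \le\ C\,\operatorname{length}(\gamma_i)\,R^{d-1-p}.
\end{equation*}
As $p<d-1$, the exponent $d-1-p$ is positive, so this bound tends to $0$ as $R\to0$; together with $\|G_i-Id\|_p\to0$ (since $G_i-Id$ is bounded and supported on a set of vanishing measure) this gives $\|G_i-Id\|_{1,p}\to0$, hence $\|F-Id\|_{1,p}<\delta$ for $r$ (and thus $R$) small enough.

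\emph{The main obstacle.} The heart of the matter — and the one place where $p<d-1$ is used — is the $W^{1,p}$ bound in Step 3: one is forced to displace a set of positive measure by a fixed, non-small amount with a volume preserving map that is the identity outside a tube, so on the tube the map is far from $Id$ in $C^0$ and its derivative is necessarily of order $R^{-1}$; the gain is that this happens only on a tube of measure of order $R^{d-1}$, and the $L^p$ trade-off ``size $R^{-1}$ on a set of measure $R^{d-1}$'' is favourable exactly when $p<d-1$. Everything else — constructing the divergence correction $W_t$ with the quoted bounds, checking that $G_i$ really is a translation on the small ball and that the $C^0$ displacement is as claimed, and choosing short pairwise disjoint arcs — is routine.
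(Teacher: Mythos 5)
Your plan correctly isolates the heart of the matter (and it is the same as the paper's): the derivative must be of order $R^{-1}$ on a tube of volume of order $R^{d-1}$, so the $L^p$-norm of the derivative scales like $R^{(d-1-p)/p}$, which goes to zero precisely when $p<d-1$. The paper also builds a localized perturbation supported in a thin neighbourhood of a curve joining $P_i$ to $Q_i$ (an ellipse, with the pseudo-ring $PR^{\mathcal{E}_i}_r$ playing the role of your tube $T_i$) and shows $|DF|\le C/r$ there. So the estimate you aim for is attainable. However, two steps of your argument have genuine gaps.

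First, the derivative bound $\|DG_i-Id\|_\infty\lesssim R^{-1}$ does not follow from the flow construction as stated. The time-one map of a flow satisfies the variational inequality $\|DG_1\|\le\exp\bigl(\int_0^1\|DV_t\|_\infty\,dt\bigr)$, and with $\|DV_t\|_\infty\lesssim R^{-1}$ this gives $\|DG_1\|\lesssim e^{C/R}$, which is catastrophic for the $L^p$-estimate (it defeats the $R^{d-1}$ gain in measure). Your heuristic -- that the displacement goes from $|v|$ to $0$ over a transverse distance $R$, so the derivative must be of order $R^{-1}$ -- is a lower bound on the derivative somewhere, not an upper bound everywhere, and for a generic flow the upper bound really is exponential. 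To get a polynomial bound one must build $G_i$ as a single explicit map (as the paper does via the rotation $F_b^\alpha$ of Lemma~\ref{rotacao}, whose Jacobian is written down and seen directly to be $O(1/r)$), or design $V_t$ with extra structure (shear-like / nilpotent) and prove the accumulation is only polynomial; neither is done in your write-up, so the crucial estimate is unproved.

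Second, the $C^0$ estimate is not justified for $d=2$. Since $G_i$ maps $T_i$ to itself, the displacement of any point is bounded by $\operatorname{diam}(T_i)\approx\operatorname{diam}(\gamma_i)+2R$. In $d\ge 3$ one can perturb straight segments into disjoint arcs whose diameter stays close to $|P_i-Q_i|$, so this is fine. In $d=2$, when the segments $[P_i,Q_i]$ intersect (as they typically do in the application, where $\{Q_i\}$ is a permutation of $\{P_i\}$), the arcs $\gamma_i$ must detour around one another; your Step~1 only invokes connectedness of the complement, which guarantees existence of disjoint arcs but says nothing about their diameters, so the bound $\|F-Id\|_\infty<\max_i|P_i-Q_i|+\delta$ is not established. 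The paper handles $d=2$ with a separate argument (its \textbf{Case 2}): the transport is decomposed into finitely many stages $[s_j,s_{j+1}]$ on which the moving segments are pairwise disjoint, the displacements add telescopically to $\max_i|P_i-Q_i|+2\sum_j r_j$, and the $L^p$-estimate for the composition is made to work by a delicate hierarchical choice of radii $r_1,\dots,r_m$. Your single-tube-per-pair construction does not obviously yield this bound, and you should either adopt the staged composition or prove that small-diameter disjoint arcs always exist in the plane, which you currently do not.
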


\begin{remark}
	In the previous theorem, the function $F$ will actually be equal to the identity outside a finite union of tubular neighbourhoods of ellipses of radius chosen as small as we need. 
\end{remark}

\begin{remark}
	The conclusion of the previous theorem is false if $p>d$, unless $P_i=Q_i$ for all $i=1,\ldots,N$. In fact, that conclusion would imply the existence of a sequence $(F_n)_n$ of functions converging to $I_d$ in $W^{1,p}(I^d,I^d)$ and such that the sequence $(\|F_n-Id\|_\infty)_n $ converges to $\max_i|P_1-Q_i|$, which contradicts the continuous inclusion of $C^0(I^d)$ in $W^{1,p}(I^d)$, for those $p$.
\end{remark}
The remainder of this section is devoted to proving Theorem~\ref{key:pert2}.

\bigskip

\subsection{Pseudo-ring}
We introduce the following notion. Let $\mathcal{E}$ be an ellipse (not reduced to a point). Then there exist unique $b$ and $R$, with $0\leq b\leq 1$ and $R> 0$, such that in a certain orthonormal basis of $\R^d$, after a translation, the equations of the ellipse are $x_1^2+\left(\tfrac{x_2}{b}\right)^2=R^2$, $x_i=0$, for $i\geq 3$.

For $r>0$, we denote by $PR^\mathcal{E}_r$ the set, in the new variables,
\begin{equation*}
	\left\{x\in \R^d: \left(\sqrt{x_1^2+\left(\frac{x_2}{b}\right)^2}-R\right)^2+\sum_{i=3}^dx_i^2\leq r^2\right\}.
\end{equation*}
We call this set a pseudo-ring in $\R^d$ as, if $b=1$ and $R>r>0$, it is  a $d$-ring i.e., obtained by the rotation of a $d-1$ ball of radius $r$ around a circle of radius $R$.  Notice that, if $R>r>0$ then, denoting by $\omega_{d-1}$  the volume of the unit ball in $\R^{d-1}$,
\begin{equation}\label{volumePT}
	\lambda(PR^\mathcal{E}_r)=
	2\pi\,R\,b\,\omega_{d-1}r^{d-1}.
\end{equation}

If $b<1$ then $PR^\mathcal{E}_r$ is not a tubular neighbourhood of $\mathcal{E}$ but is contained and contains tubular neighbourhoods of $\mathcal{E}$. We denote by $D(\mathcal{E},s)$, for $s>0$, the closed tubular neighbourhood of $\mathcal{E}$ of radius $s$.

\begin{lemma}\label{contido}
	In the previous conditions,
	\begin{equation*}
		D(\mathcal{E},br)\subseteq PR^\mathcal{E}_r\subseteq D(\mathcal{E},r).
	\end{equation*}
	
	In particular, the disk $D\left((\pm R,0,\ldots,0),br\right)$ is contained in $PR^\mathcal{E}_r$.
\end{lemma}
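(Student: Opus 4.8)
The plan is to prove the chain $D(\mathcal{E},br)\subseteq PR^\mathcal{E}_r\subseteq D(\mathcal{E},r)$ by working in the distinguished orthonormal coordinates in which $\mathcal{E}$ has equations $x_1^2+(x_2/b)^2=R^2$, $x_i=0$ for $i\geq 3$, and comparing, for each point $x\in\R^d$, the Euclidean distance $\operatorname{dist}(x,\mathcal{E})$ with the ``pseudo-radius'' $\rho(x):=\big(\sqrt{x_1^2+(x_2/b)^2}-R\big)^2+\sum_{i=3}^d x_i^2$ that defines $PR^\mathcal{E}_r$. Since both the ellipse and $PR^\mathcal{E}_r$ are rotationally symmetric in the $(x_3,\ldots,x_d)$-variables, it essentially suffices to understand the planar picture in the $(x_1,x_2)$-plane together with the orthogonal contribution $t^2:=\sum_{i\geq 3}x_i^2$; I would first reduce to showing, for every point $(x_1,x_2)$ in the plane, the two-sided estimate
\begin{equation*}
	b\,\Big|\sqrt{x_1^2+\tfrac{x_2^2}{b^2}}-R\Big|\ \leq\ \operatorname{dist}\big((x_1,x_2),\mathcal{E}_0\big)\ \leq\ \Big|\sqrt{x_1^2+\tfrac{x_2^2}{b^2}}-R\Big|,
\end{equation*}
where $\mathcal{E}_0$ is the planar ellipse $x_1^2+(x_2/b)^2=R^2$; the full $d$-dimensional statement then follows by adding $t^2$ to each squared quantity and taking square roots.

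For the upper bound I would argue as follows: given $(x_1,x_2)$, let $u:=\sqrt{x_1^2+x_2^2/b^2}$, so $(x_1,x_2)=(u\cos\theta, bu\sin\theta)$ for a suitable angle $\theta$, and the point $q:=(R\cos\theta, bR\sin\theta)$ lies on $\mathcal{E}_0$. Then $(x_1,x_2)-q=(u-R)(\cos\theta,b\sin\theta)$, whose Euclidean length is $|u-R|\sqrt{\cos^2\theta+b^2\sin^2\theta}\leq |u-R|$ because $b\leq 1$; since $\operatorname{dist}((x_1,x_2),\mathcal{E}_0)\leq |(x_1,x_2)-q|$, this gives the right-hand inequality, hence $PR^\mathcal{E}_r\subseteq D(\mathcal{E},r)$. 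For the lower bound I would take a nearest point $q=(R\cos\phi,bR\sin\phi)\in\mathcal{E}_0$ to $(x_1,x_2)$ and estimate $|(x_1,x_2)-q|$ from below by the distance between the two points $(x_1,x_2)$ and $q$ measured after the linear rescaling $(y_1,y_2)\mapsto(y_1,y_2/b)$, which is $1/b$-Lipschitz on the relevant set, combined with the fact that this rescaling maps $\mathcal{E}_0$ to the circle of radius $R$ and maps $(x_1,x_2)$ to the point at radius $u$; the distance from a point at radius $u$ to that circle is exactly $|u-R|$. Tracking the Lipschitz constant carefully yields $\operatorname{dist}((x_1,x_2),\mathcal{E}_0)\geq b|u-R|$, hence $D(\mathcal{E},br)\subseteq PR^\mathcal{E}_r$.

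The final assertion about $D\big((\pm R,0,\ldots,0),br\big)\subseteq PR^\mathcal{E}_r$ is then immediate: the points $(\pm R,0,\ldots,0)$ lie on $\mathcal{E}$ (they correspond to $\theta=0,\pi$), so their $br$-balls are contained in $D(\mathcal{E},br)$, which in turn sits inside $PR^\mathcal{E}_r$ by the first inclusion. I expect the main obstacle to be the lower bound $\operatorname{dist}((x_1,x_2),\mathcal{E}_0)\geq b|u-R|$: the naive parametrization argument only controls the distance to \emph{one particular} point of $\mathcal{E}_0$, whereas the true nearest point may be elsewhere, so one must either set up the rescaling/Lipschitz comparison cleanly or instead minimize $|(x_1,x_2)-(R\cos\phi,bR\sin\phi)|^2$ over $\phi$ directly and check the inequality from the first-order condition; some care is also needed near $R=0$ and near $b=0$ (excluded or handled as limiting/degenerate cases), and one should note where the hypothesis $R>r>0$ is actually used versus where the inclusions hold for all $R>0$.
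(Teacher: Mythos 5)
Your proposal is correct and follows essentially the same route as the paper. The paper likewise reduces to the planar case $d=2$, proves the upper bound by comparing $x$ with the specific ellipse point $y=\tfrac{R}{\xi}(x_1,x_2)$ (which is exactly your $q=(R\cos\theta,bR\sin\theta)$ after unwinding the parametrization), and proves the lower bound by exactly the reverse-triangle-inequality computation that your Lipschitz-rescaling argument encodes: for any $(\alpha,\beta)\in\mathcal{E}_0$, $\big|\sqrt{x_1^2+(x_2/b)^2}-R\big|\leq\tfrac{1}{b}\|(x_1-\alpha,x_2-\beta)\|$, which is precisely the statement that $(y_1,y_2)\mapsto(y_1,y_2/b)$ is $\tfrac{1}{b}$-Lipschitz applied before passing to the infimum over $\mathcal{E}_0$. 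The step you flagged as the main potential obstacle (controlling the distance to an \emph{arbitrary} nearest point, not just the one in your parametrization) is therefore already handled cleanly by this estimate, since it holds for every point of $\mathcal{E}_0$ simultaneously; no separate minimization over $\phi$ is needed.
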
 
\begin{proof}
	We only need to consider the case $d=2$.
	
	\noindent Let $x=(x_1,x_2)\in PR^\mathcal{E}_r$ and consider  $\xi=\sqrt{x_1^2+\left(\frac{x_2}{b}\right)^2}$, $y=\frac{R}{\xi}(x_1,x_2)\in\mathcal{E}$. Then
	\begin{equation*}
		\|x-y\|_2=\tfrac{|\xi-R|}{\xi} \sqrt{x_1^2+x_2^2}\leq |\xi-R|\leq r,
	\end{equation*}
	proving that $x\in D(\mathcal{E},r)$.

	On the other hand, let  $(x_1,x_2)\in D(\mathcal{E},br)$ and consider $(\alpha,\beta)\in\mathcal{E}$ such that $\|(x_1,x_2)-(\alpha,\beta)\|_2\leq br$. Then $(x_1,x_2)\in PR^\mathcal{E}_r$, as 
	\begin{align*}
		\left|\sqrt{x_1^2+\left(\tfrac{x_2}{b}\right)^2}-R\right| & = 
		\Big|\left\|\left(x_1,\tfrac{x_2}{b}\right)\right\|_2-\left\|\left(\alpha,\tfrac{\beta}{b}\right)\right\|_2\Big|\leq
		\left\|\left(x_1-\alpha,\tfrac{x_2}{b}-\tfrac{\beta}{b}\right)\right\|_2\\
		& =  \sqrt{(x_1-\alpha)^2+\tfrac{1}{b^2}(x_2-\beta)^2} \leq \tfrac{1}{b}\sqrt{(x_1-\alpha)^2+(x_2-\beta)^2}\\
		& =  \tfrac{1}{b}\,\|(x_1-\alpha,x_2-\beta)\|\leq r.
	\end{align*}
	As a particular case, noticing that $(\pm R,0)\in\mathcal{E}$, we obtain $D\left((\pm R,0),br\right)\subseteq PR^\mathcal{E}_r$.
\end{proof} 

\bigskip

\subsection{Proof of Theorem \ref{key:pert2}}
We start by proving some auxiliary results. 

\begin{lemma}\label{rotacao}
	Let $d\geq 2$, $b>0$,  $Z:\R^{d-1}\rightarrow\R$ be a differentiable function and consider $\alpha: \R^d \longrightarrow  \R$ defined by $\alpha(x)=Z\left(x_1^2+\left(\tfrac{x_2}{b}\right)^2,\bar x\right)$, where $\bar x=(x_3,\ldots,x_d)$.
	
	Then the function $F_b^\alpha: \R^d \rightarrow \R^d$ defined by
	\begin{equation*}
		F_b^\alpha(x)=\left(x_1\cos(\alpha(x))-\tfrac{1}{b}\,x_2\sin(\alpha(x)),b\,x_1\sin(\alpha(x))+x_2\cos(\alpha(x)),\bar x \right)
	\end{equation*}
	is a volume preserving diffeomorphism, with the same differential regularity as $Z$.
\end{lemma}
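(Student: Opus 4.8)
The plan is to establish the three assertions of the statement separately. Regularity is immediate: $q(x):=x_1^2+\bigl(\tfrac{x_2}{b}\bigr)^2$ is a polynomial, so $\alpha(x)=Z(q(x),\bar x)$ has exactly the differential class of $Z$, and $F_b^\alpha$, being built from $\alpha$ by composition with $\cos$, $\sin$ and the linear coordinate maps, has the same class; the same will be true of the inverse produced below.

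The key point for bijectivity is that $F_b^\alpha$ leaves $\bar x$ unchanged and also preserves the quantity $q$. Indeed, abbreviating $\gamma=\cos(\alpha(x))$, $\sigma=\sin(\alpha(x))$ and writing $y_1,y_2$ for the first two components of $F_b^\alpha(x)$, a one-line computation using $\gamma^2+\sigma^2=1$ gives
\[
y_1^2+\Big(\tfrac{y_2}{b}\Big)^2=x_1^2+\Big(\tfrac{x_2}{b}\Big)^2 .
\]
Hence $\alpha$ takes the same value at $x$ and at $F_b^\alpha(x)$, which is precisely what makes the ``rotate back'' map well defined: the map $G$ obtained from the same formula by replacing $\alpha$ with $-\alpha$ (that is, $\sigma\mapsto-\sigma$), with the angle recomputed at the point to which it is applied, satisfies $G\circ F_b^\alpha=Id=F_b^\alpha\circ G$, once more by $\gamma^2+\sigma^2=1$. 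Since $G$ has the same differential class as $Z$, this already shows that $F_b^\alpha$ is a diffeomorphism. (Conceptually, the linear change of variables $(x_1,x_2,\bar x)\mapsto(x_1,\tfrac{x_2}{b},\bar x)$ turns $F_b^\alpha$ into the ``twist'' that, in polar coordinates around the $\bar x$-axis, sends $(\rho,\phi,\bar x)$ to $(\rho,\phi+Z(\rho^2,\bar x),\bar x)$, which is transparently invertible.)

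For volume preservation I would compute the Jacobian determinant directly. Because the last $d-2$ coordinates are fixed and independent of $(x_1,x_2)$, the differential $DF_b^\alpha$ is block lower triangular with an $I_{d-2}$ block, so $\det DF_b^\alpha=\det A$ with $A=\partial(y_1,y_2)/\partial(x_1,x_2)$. Differentiating the two components, using $\partial_{x_1}\alpha=2x_1Z_1$ and $\partial_{x_2}\alpha=\tfrac{2x_2}{b^2}Z_1$ (with $Z_1$ the partial of $Z$ in its first slot, evaluated at $(q(x),\bar x)$) and simplifying the four entries with the identities $x_1\sigma+\tfrac1b x_2\gamma=\tfrac1b y_2$ and $bx_1\gamma-x_2\sigma=by_1$, one obtains
\[
\det A = 1-\tfrac{x_2}{b}\,\partial_{x_1}\alpha+b x_1\,\partial_{x_2}\alpha ,
\]
and the last two terms cancel, since each equals $\pm\tfrac{2x_1x_2}{b}Z_1$. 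Thus $\det DF_b^\alpha\equiv1$ and $F_b^\alpha$ is volume preserving; the twist picture above explains the cancellation, as a twist manifestly preserves the volume element $\rho\,d\rho\,d\phi\,d\bar x$.

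The only step with genuine content is this $2\times2$ Jacobian computation together with noticing the cancellation (equivalently, setting up the conjugation to the twist map); everything else is formula manipulation, so I do not expect any real obstacle.
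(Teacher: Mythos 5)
Your proof is correct and follows essentially the same route as the paper: you show the quantity $x_1^2+(x_2/b)^2$ is preserved, hence $\alpha(F_b^\alpha(x))=\alpha(x)$ so that $F_b^{-\alpha}$ is the inverse, and you reduce the Jacobian to a $2\times 2$ block whose determinant collapses to $1$ after the cancellation of the $\partial\alpha$ terms. The aside identifying $F_b^\alpha$, after the linear change $x_2\mapsto x_2/b$, with a twist map in polar coordinates is a nice conceptual bonus but does not change the argument.
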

\begin{proof}
	
	Note that, writing  $F_b^\alpha=F=(F_1,\ldots,F_d)$, we have
	\begin{equation}\label{partial}
		\left\{\begin{array}{rcr}
			\dfx{F_1}{x_1}(x) & = & \cos(\alpha(x))-x_1\sin(\alpha(x))\dfx{\alpha}{x_1}(x)-\tfrac{1}{b}x_2\cos(\alpha(x))\dfx{\alpha}{x_1}(x)\\[2mm]
			\dfx{F_2}{x_2}(x) & = & \cos(\alpha(x))-x_2\sin(\alpha(x))\dfx{\alpha}{x_2}(x)+bx_1\cos(\alpha(x))\dfx{\alpha}{x_2}(x)\\[2mm]
			\dfx{F_1}{x_2}(x) & = & -\frac{1}{b}\sin(\alpha(x))-x_1\sin(\alpha(x))\dfx{\alpha}{x_2}(x)-\tfrac{1}{b}x_2\cos(\alpha(x))\dfx{\alpha}{x_2}(x)
			\\[2mm]
			\dfx{F_2}{x_1}(x) & = & b\sin(\alpha(x))-x_2\sin(\alpha(x))\dfx{\alpha}{x_1}(x)+bx_1\cos(\alpha(x))\dfx{\alpha}{x_1}(x)
		\end{array}\right.
	\end{equation}
	and then
	\begin{align*}
		\det{JF(x)} & =  \dfx{F_1}{x_1}(x)\dfx{F_2}{x_2}(x)-\dfx{F_1}{x_2}(x)\dfx{F_2}{x_1}(x)\\
		& =  1+bx_1\dfx{\alpha}{x_2}(x)-\tfrac{1}{b}x_2\dfx{\alpha}{x_1}(x)\\
		& =  1+bx_1\dfx{Z}{x_1}\left( x_1^2+\left(\tfrac{x_2}{b}\right)^2,\bar x\right) \tfrac{2}{b^2}x_2-\tfrac{1}{b}x_2\dfx{Z}{x_1}\left( x_1^2+\left(\tfrac{x_2}{b}\right)^2,\bar x\right) 2x_1\\
		& =  1.
	\end{align*}
	On the other hand, as 
	\begin{equation*}
		F_1^2(x)
		+\frac{F_2^2(x)}{b^2}=x_1^2+\left(\frac{x_2}{b}\right)^2,
	\end{equation*}
	then $\alpha(x)=\alpha(F_b^\alpha(x))$. Therefore, it is easy to see that the inverse of $	F_b^\alpha$ is $	F_b^{-\alpha}$.
\end{proof}

Given $\mu\in\,]0,1[$, let $h_\mu:\R\rightarrow\R$ be a $C^\infty$ function, equal to $\pi$ in $]-\infty,1-\mu]$, strictly decreasing in $]1-\mu,1[$ and equal to $0$ in $[1,+\infty[$. 
An explicit example of a function $h_\mu$ is defined in $]1-\mu,1[$ by 
\begin{equation*}
	h_\mu(t)=\frac{\pi \,e^{\frac{\mu}{t-1}}}{e^{\frac{\mu}{t-1}}+e^{\frac{\mu}{1-t-\mu}}}.
\end{equation*}

As $h_\mu'$ is zero outside $[1-\mu,1]$ and $h_\mu'$ is continuous then $|h_\mu'|$ is bounded by a constant  $C=C(\mu)$ As a curiosity one can verify that the best constant is $\frac{2\pi}{\mu}$.

\begin{lemma}\label{rotacao1}
	Let $d\geq 2$ and consider an ellipse $\mathcal{E}$.
	Then, for $r>0$, there exists $F:\R^d\rightarrow\R^d$, a volume preserving $C^\infty$ diffeomorphism, which is equal to the identity outside $PR^\mathcal{E}_r$, and in $PR^\mathcal{E}_\frac{r}{2}$ is a rotation of an angle $\pi$  in the plane of the ellipse whose centre is the centre of the ellipse.
	
	Moreover, for $r>0$, there exists $C$, increasing with $r$, such that $\left|\frac{\partial F_i}{\partial x_j}(x)\right|\leq \frac{C}{r}$, for all $i,j=1,\ldots,d$ and $x\in\R^d$.	 
\end{lemma}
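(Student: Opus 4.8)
\textbf{Proof plan for Lemma~\ref{rotacao1}.}
The idea is to apply Lemma~\ref{rotacao} with a suitable radial profile function $Z$ chosen so that the resulting diffeomorphism $F = F_b^\alpha$ is a rigid rotation by $\pi$ on the inner pseudo-ring $PR^\mathcal{E}_{r/2}$ and the identity outside $PR^\mathcal{E}_r$. Working in the orthonormal coordinates adapted to $\mathcal{E}$ in which the ellipse has equations $x_1^2+(x_2/b)^2=R^2$, $x_i=0$ for $i\geq 3$, recall from the definition of $PR^\mathcal{E}_r$ that the pseudo-ring is precisely the sublevel set $\rho(x)^2 \leq r^2$, where
\begin{equation*}
	\rho(x)^2 = \Big(\sqrt{x_1^2+(x_2/b)^2}-R\Big)^2 + \sum_{i=3}^d x_i^2 .
\end{equation*}
Note $\rho(x)^2$ is a function of $u := x_1^2+(x_2/b)^2$ and $\bar x = (x_3,\dots,x_d)$ alone, namely $\rho^2 = (\sqrt{u}-R)^2 + |\bar x|^2$, so setting $\alpha(x) := h_\mu(\rho(x)/r)$ for the cutoff $h_\mu$ introduced above (with, say, $\mu=\tfrac12$) puts $\alpha$ in exactly the form $Z(x_1^2+(x_2/b)^2,\bar x)$ required by Lemma~\ref{rotacao}, provided $R>r$ so that $\sqrt{u}-R$ does not vanish on the support and $\sqrt{u}$ is smooth there. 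By Lemma~\ref{rotacao}, $F=F_b^\alpha$ is then a volume preserving $C^\infty$ diffeomorphism; since $h_\mu\equiv 0$ on $[1,+\infty)$, we get $\alpha\equiv 0$ and $F=\mathrm{Id}$ outside $PR^\mathcal{E}_r$, and since $h_\mu\equiv\pi$ on $(-\infty,1-\mu]$, on $PR^\mathcal{E}_{r/2}$ (where $\rho/r\leq 1/2 = 1-\mu$) we have $\alpha\equiv\pi$, so that $F(x) = (-x_1,-x_2,\bar x)$ there, which is exactly the rotation by $\pi$ in the plane of the ellipse about its centre (the centre being the origin in these coordinates).

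For the derivative bound, I would differentiate $\alpha(x) = h_\mu(\rho(x)/r)$ via the chain rule: $\partial\alpha/\partial x_j = h_\mu'(\rho/r)\, r^{-1}\, \partial\rho/\partial x_j$. The function $h_\mu'$ is bounded by a constant $C(\mu)$ depending only on $\mu$ (which is now fixed), and is supported where $\rho/r\in[1-\mu,1]$, i.e. $\rho\in[r/2,r]$. On that annular region one checks that $|\partial\rho/\partial x_j|\leq 1$ for every $j$: indeed $\rho$ is a Euclidean-type distance function — for $j\geq 3$, $\partial\rho/\partial x_j = x_j/\rho$ with $|x_j|\leq\rho$; for $j\in\{1,2\}$ one computes $\partial\rho/\partial x_j = \big((\sqrt{u}-R)/\rho\big)\cdot\big((\sqrt u - R)\,\text{-factor derivative}\big)$ and the chain of factors $x_1/\sqrt u$, $x_2/(b^2\sqrt u)$ together with $|\sqrt u - R|\leq\rho$ keeps this bounded by a constant depending only on $b$ and on a lower bound for $\sqrt u$ on the support (which is bounded below by $R-r>0$). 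Feeding $|\partial\alpha/\partial x_j|\leq C'(\mu,b,R)/r$ into the explicit formulas \eqref{partial} for the partials of $F=F_b^\alpha$, and using that $|x_1|,|x_2/b|$ are bounded by $\sqrt u\leq R+r$ on the support while $|\cos\alpha|,|\sin\alpha|\leq 1$, gives $|\partial F_i/\partial x_j(x)|\leq C/r$ with $C$ depending on $\mu$ (fixed), $b$, $R$; monotonicity of this $C$ in $r$ can be arranged since everything in sight is an increasing function of the radius, or simply by replacing $C$ with $\sup$ over the relevant range.

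The main obstacle — really the only delicate point — is bookkeeping the constant: one must make sure that, although the factor $r^{-1}$ is unavoidable (it comes from rescaling the fixed profile $h_\mu$ to width $r$), the remaining constant does not secretly blow up as $r\to 0$. This is exactly why the hypothesis "$r>0$" is implicitly "$r$ small, with $R>r$" — then $\sqrt u \in [R-r, R+r]$ stays in a fixed compact interval bounded away from $0$, so the denominators $\sqrt u$ appearing in $\partial\rho/\partial x_1,\partial\rho/\partial x_2$ are controlled uniformly, and the remaining ingredients ($b$, $R$, $\|h_\mu'\|_\infty$, the size of the support) are all independent of $r$. A secondary, purely cosmetic point is arranging $C$ to be \emph{increasing} in $r$: this is immediate once we have a bound of the form $C_0/r$ on $PR^\mathcal{E}_r$ with $C_0$ independent of $r$ for small $r$, since we may then take $C := C_0$ and note the estimate only gets weaker as $r$ grows, or re-package it as needed for the application in the proof of Theorem~\ref{key:pert2}.
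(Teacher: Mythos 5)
Your proposal is essentially the same as the paper's proof, with one cosmetic difference: the paper sets $\alpha(x)=h\bigl(\rho(x)^2/r^2\bigr)$ with $h=h_{3/4}$, while you set $\alpha(x)=h_\mu(\rho(x)/r)$ with $\mu=1/2$. Both choices produce the same transition region $\rho\in[r/2,r]$ and both lead to the bound $|\partial\alpha/\partial x_j|\lesssim \|h'\|_\infty/r$ via the chain rule. The paper's use of $\rho^2$ avoids ever writing the outer square root, so it does not need to invoke the cutoff's constancy near $0$ to excuse the non-smoothness of $\rho$ there; your variant needs that remark (and you give it). One small inaccuracy: you claim $|\partial\rho/\partial x_j|\le 1$ for all $j$, but for $j=2$ the correct bound is $1/b$ (which is $\ge 1$ since $0\le b\le 1$); you implicitly correct this later by letting the constant depend on $b$, and the paper's explicit computation gives $|\partial\alpha/\partial x_2|\le 2\|h'\|_\infty/(br)$, matching this. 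You also correctly flag that the lemma is used only for small $r$, which is exactly how it is applied in the proof of Theorem~\ref{key:pert2}.
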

\begin{proof}
	Using a translation followed by a linear function with determinant equal to $1$, we can suppose that the ellipse has equations $x_1^2+\left(\tfrac{x_2}{b}\right)^2=R^2$, $x_i=0$, for $i\geq 3$, for some $b>0$.
	
	Denote $h_{\frac{3}{4}}$ by $h$, consider $\alpha:\R^d\rightarrow\R$ defined by
	\begin{equation*}
		\alpha(x)=h\left(\tfrac{1}{r^2}\left[\left(\sqrt{x_1^2+\left(\tfrac{x_2}{b}\right)^2}-R\right)^2+\sum_{i=3}^dx_i^2\right]\right),
	\end{equation*}
	and $F$ defined in Lemma \ref{rotacao}, for this function $\alpha$. If $x\notin PR^\mathcal{E}_r$ then $\alpha(x)=0$ and so $F(x)=x$. If $x\in PR^\mathcal{E}_\frac{r}{2}$, then $\alpha(x)=\pi$ and $F(x)=(-x_1,-x_2,\bar x)$.
	
	For the second part, we only need to consider points in the interior of $PR^\mathcal{E}_r$. For those points, we have
	\begin{align*}
		\left|\frac{\partial \alpha}{\partial x_1}(x)\right| & \leq   \frac{\|h'\|_\infty}{r^2}\left|\sqrt{x_1^2+\left(\tfrac{x_2}{b}\right)^2}-R\right|\frac{2|x_1|}{\sqrt{x_1^2+\left(\tfrac{x_2}{b}\right)^2}}\leq\frac{2\|h'\|_\infty}{r},\\
		\left|\frac{\partial \alpha}{\partial x_2}(x)\right|	  & \leq   \frac{\|h'\|_\infty}{r^2}\left|\sqrt{x_1^2+\left(\tfrac{x_2}{b}\right)^2}-R\right|\frac{2|x_2|}{b^2\sqrt{x_1^2+\left(\tfrac{x_2}{b}\right)^2}}\leq\frac{2\|h'\|_\infty}{br},\\
		\left|\frac{\partial \alpha}{\partial x_i}(x)\right| & \leq  \frac{2\|h'\|_\infty}{r^2}|x_i|\leq\frac{2\|h'\|_\infty}{r},\qquad i \geq 3.
	\end{align*}
	
	The conditions on the partial derivatives of $F$ are simply a consequence of \eqref{partial}, the inequalities above, the inequalities $|\cos(\alpha(x))|,|\sin(\alpha(x))|\leq 1$, and the boundedness $|x_1|\leq R+r$, $|x_2|\leq b(R+r)$ and $|x_i|\leq r$, for $i\geq 3$.
\end{proof}

\bigskip

\begin{lemma}\label{rotacao2}
	Let $d\geq 2$ and $A\in\R^d$. Then, for $s>0$ and any plane $\Pi$ passing throughout $A$, there exists a volume preserving $C^{\infty}$ diffeomorphism $G:\R^d\rightarrow\R^d$, which is equal to the identity outside the sphere centred in $A$ and radius $s$, and in the sphere centred in $A$ and radius $\frac{s}{2}$ is a rotation of an angle $\pi$ relatively to the plane $\Pi$, with centre in $A$.

	Moreover, there exists $C$, independent of $s$, such that $\left|\frac{\partial G_i}{\partial x_j}(x)\right|\leq C$, for all $i,j=1,\ldots,d$ and $x\in\R^d$.	 
\end{lemma}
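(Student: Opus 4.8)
The idea is to realize $G$ as a compactly supported rotation by interpolating the angle from $\pi$ (near $A$) to $0$ (outside the sphere of radius $s$), exactly as in Lemma \ref{rotacao1} but now in the \emph{spherical} (rather than pseudo-ring) geometry, so that the radial scale is $s$ itself and the differential estimate becomes scale-invariant. First I would reduce to the case $A=0$ by a translation and, after an orthogonal change of coordinates, assume $\Pi$ is the $(x_1,x_2)$-plane. Let $h=h_{3/4}$ be the $C^\infty$ cut-off function introduced before Lemma \ref{rotacao1}, equal to $\pi$ on $]-\infty,1-\tfrac34]$, strictly decreasing on $]\tfrac14,1[$, and $0$ on $[1,+\infty[$, with $\|h'\|_\infty<\infty$. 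Define
\begin{equation*}
	\beta(x)=h\!\left(\frac{|x|^2}{s^2}\right),
\end{equation*}
and set
\begin{equation*}
	G(x)=\big(x_1\cos(\beta(x))-x_2\sin(\beta(x)),\,x_1\sin(\beta(x))+x_2\cos(\beta(x)),\,\bar x\big),
\end{equation*}
with $\bar x=(x_3,\ldots,x_d)$. Since $|G(x)|=|x|$ we have $\beta(G(x))=\beta(x)$, so $G$ is a diffeomorphism with inverse obtained by replacing $\beta$ with $-\beta$; it is $C^\infty$ because $h$ and $x\mapsto|x|^2$ are. For $|x|\geq s$ we get $\beta(x)=0$, hence $G(x)=x$; for $|x|\leq s/2$ we get $|x|^2/s^2\leq 1/4=1-3/4$, hence $\beta(x)=\pi$ and $G(x)=(-x_1,-x_2,\bar x)$, the rotation by $\pi$ about $\Pi$. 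This gives every stated property except volume preservation and the derivative bound.

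For volume preservation one computes $\det JG$. Writing $r^2=x_1^2+x_2^2$ (the squared distance to $\Pi$), the block structure of $JG$ shows $\det JG$ equals the Jacobian of the planar map $(x_1,x_2)\mapsto(x_1\cos\beta-x_2\sin\beta,\,x_1\sin\beta+x_2\cos\beta)$, treating $\beta$ as a function of $(x_1,x_2,\bar x)$; the same cancellation as in Lemma \ref{rotacao} occurs, because $\partial\beta/\partial x_1$ and $\partial\beta/\partial x_2$ are proportional to $x_1$ and $x_2$ respectively (both carrying the common factor $h'(|x|^2/s^2)\cdot 2/s^2$), so the two cross terms $x_1\,\partial\beta/\partial x_2-x_2\,\partial\beta/\partial x_1$ cancel and $\det JG\equiv 1$. (Alternatively, one may invoke Lemma \ref{rotacao} directly with $b=1$ and $Z$ chosen so that $\alpha(x)=\beta(x)$; the only point to check is that $\beta$ can be written as a function of $x_1^2+x_2^2$ and $\bar x$, which it can since $|x|^2=(x_1^2+x_2^2)+|\bar x|^2$.)

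For the derivative bound, note $|\partial\beta/\partial x_j(x)|\leq \|h'\|_\infty\cdot 2|x_j|/s^2\leq 2\|h'\|_\infty/s$ on the support $\{|x|\leq s\}$, where we used $|x_j|\leq|x|\leq s$. Each entry $\partial G_i/\partial x_j$ is, by the product rule, a sum of terms each of which is a product of $\cos\beta$ or $\sin\beta$ (bounded by $1$), a coordinate $x_k$ with $|x_k|\leq s$, and a partial derivative of $\beta$ bounded by $2\|h'\|_\infty/s$, plus possibly a bare $\cos\beta$ or $\sin\beta$ term; hence $|\partial G_i/\partial x_j(x)|\leq 1+2s\cdot(2\|h'\|_\infty/s)=1+4\|h'\|_\infty=:C$, which is independent of $s$. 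I do not anticipate a genuine obstacle here: the only thing to be careful about is that the scaling $|x|^2/s^2$ (rather than $|x|/s$) is what makes $\beta$ smooth at the origin and simultaneously makes the derivative estimate scale-invariant, and that the spherical symmetry is exactly what forces the Jacobian cancellation — so the proof is a direct transcription of Lemmas \ref{rotacao} and \ref{rotacao1} with the pseudo-ring replaced by a ball.
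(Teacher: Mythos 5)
Your proof is correct and takes essentially the same route as the paper: the paper simply defines $\alpha$ as in Lemma~\ref{rotacao1} with $R=0$ and $b=1$ (so $\alpha(x)=h\bigl(|x|^2/s^2\bigr)$, which is your $\beta$) and notes that, since $|x_1|,|x_2|\le s$ on the support, the products $x_k\,\partial\alpha/\partial x_j$ appearing in the partial derivatives of $G$ are bounded independently of $s$. You merely spell out the normalization to $A=0$ and $\Pi=\{(x_1,x_2,0,\ldots,0)\}$, the Jacobian cancellation inherited from Lemma~\ref{rotacao}, and the explicit constant $C=1+4\|h'\|_\infty$.
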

\begin{proof} 
	We define $\alpha$ like in the previous lemma, with $R=0$ and $b=1$ and follow the proof denoting by $G$ the function given by the lemma. As we have, in this case, $|x_1|,|x_2|\leq s$, the boundedness of the derivatives of $G$ are independent of $s$.
\end{proof}

\bigskip

Combining Lemma \ref{rotacao1} and Lemma \ref{rotacao2} we obtain the following.

\begin{proposition}\label{translacao}
	Let $d\geq 2$ and consider an ellipse $\mathcal{E}$ and $b=b(\mathcal{E})>0$.
	Then, for $r>0$, there exists a volume preserving $C^{\infty}$ diffeomorphism $H:\R^d\rightarrow\R^d$, equal to the identity outside $PR^\mathcal{E}_r$ and, if $P$ and $Q$ are the vertices of the ellipse, $H$ is a translation of the ball centred in $P$ and radius $\frac{b}{4}r$ to the ball centred in $Q$ and radius $\frac{b}{4}r$.
	
	Moreover, there exists $C$, independent of $r$, such that $\left|\frac{\partial H_i}{\partial x_j}(x)\right|\leq \frac{C}{r}$, for all $i,j=1,\ldots,d$ and $x\in\R^d$.	 
\end{proposition}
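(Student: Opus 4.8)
The plan is to build $H$ as the composition $H:=G\circ F$ of two diffeomorphisms: $F$ obtained from Lemma~\ref{rotacao1} applied to $\mathcal{E}$ and the given $r$, and $G$ obtained from Lemma~\ref{rotacao2} applied at one of the two vertices of $\mathcal{E}$. Since the ellipse, the pseudo-ring $PR^\mathcal{E}_r$, Euclidean balls, volume and the partial derivatives are all unchanged under a rigid motion of $\R^d$, we may and do assume that $\mathcal{E}$ is in the normal form $x_1^2+(\tfrac{x_2}{b})^2=R^2$, $x_i=0$ for $i\geq 3$; thus the centre of $\mathcal{E}$ is the origin, its plane is $\Pi=\{x\in\R^d: x_3=\cdots=x_d=0\}$, and its vertices are $P=(-R,0,\ldots,0)$ and $Q=(R,0,\ldots,0)$, both in $\Pi$.

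Lemma~\ref{rotacao1} furnishes a volume preserving $C^\infty$ diffeomorphism $F$, equal to the identity outside $PR^\mathcal{E}_r$, which on $PR^\mathcal{E}_{r/2}$ coincides with the rotation $\sigma$ by $\pi$ about the origin in the plane $\Pi$, given in our coordinates by $\sigma(x_1,x_2,\bar x)=(-x_1,-x_2,\bar x)$; in particular $\sigma(P)=Q$, and $\left|\dfx{F_i}{x_j}\right|\leq\tfrac{C_1}{r}$. Lemma~\ref{rotacao2}, applied with $A=Q$, the plane $\Pi$ and radius $s:=br$, furnishes a volume preserving $C^\infty$ diffeomorphism $G$, equal to the identity outside the ball $D(Q,br)$, which on $D(Q,\tfrac{br}{2})$ coincides with the rotation by $\pi$ about $Q$ in $\Pi$, namely $Q+(u,v,w)\mapsto Q+(-u,-v,w)$ with $(u,v)$ the $\Pi$-components, and with $\left|\dfx{G_i}{x_j}\right|\leq C_2$, the constant $C_2$ being independent of $s$. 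Set $H:=G\circ F$; it is a composition of volume preserving $C^\infty$ diffeomorphisms, hence a volume preserving $C^\infty$ diffeomorphism of $\R^d$.

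There are then three things to verify. (i) $H$ equals the identity outside $PR^\mathcal{E}_r$: by Lemma~\ref{contido}, $D(Q,br)\subseteq PR^\mathcal{E}_r$, so outside $PR^\mathcal{E}_r$ both $F$ and $G$ are the identity. (ii) $H$ restricts to a translation on $D(P,\tfrac{b}{4}r)$: since $\tfrac{b}{4}r\leq\tfrac{br}{2}$ and, by Lemma~\ref{contido} applied with $r$ replaced by $\tfrac r2$, $D(P,\tfrac{br}{2})\subseteq PR^\mathcal{E}_{r/2}$, one has $D(P,\tfrac{b}{4}r)\subseteq PR^\mathcal{E}_{r/2}$, so $F=\sigma$ on this ball; being an isometry with $\sigma(P)=Q$, $\sigma$ maps $D(P,\tfrac{b}{4}r)$ onto $D(Q,\tfrac{b}{4}r)\subseteq D(Q,\tfrac{br}{2})$, on which $G$ is the rotation by $\pi$ about $Q$. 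A direct composition then gives, for $x=(x_1,x_2,\bar x)\in D(P,\tfrac{b}{4}r)$, $H(x)=(x_1+2R,x_2,\bar x)=x+(Q-P)$; hence $H$ carries $D(P,\tfrac{b}{4}r)$ onto $D(Q,\tfrac{b}{4}r)$ by the translation $x\mapsto x+(Q-P)$. (iii) The derivative estimate follows from the chain rule, $\dfx{H_i}{x_j}(x)=\sum_{k}\dfx{G_i}{y_k}(F(x))\,\dfx{F_k}{x_j}(x)$, and the two bounds above, giving $\left|\dfx{H_i}{x_j}(x)\right|\leq\tfrac{d\,C_1C_2}{r}=:\tfrac{C}{r}$, with $C$ independent of $r$ (the constant $C_1$ of Lemma~\ref{rotacao1}, increasing with $r$, can be taken uniform as long as $r$ ranges over a bounded interval, which is the only case needed).

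The single delicate point is (ii). The map $F$, being a rotation by $\pi$ about the centre of $\mathcal{E}$, does send the ball about $P$ onto the ball about $Q$, but it does so as a point reflection rather than a translation; post-composing with the rotation $G$ by $\pi$ about $Q$ reverses that reflection and makes the net map the genuine translation $x\mapsto x+(Q-P)$. Apart from this, the proof is just the bookkeeping of the nested radii $\tfrac{b}{4}r\leq\tfrac{br}{2}$, $D(P,\tfrac{br}{2})\subseteq PR^\mathcal{E}_{r/2}$ and $D(Q,br)\subseteq PR^\mathcal{E}_r$, all provided by Lemma~\ref{contido}.
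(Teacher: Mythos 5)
Your proof is correct and takes essentially the same approach as the paper: set $H=G\circ F$ with $F$ from Lemma~\ref{rotacao1} and $G$ from Lemma~\ref{rotacao2} applied at $A=Q$, and invoke Lemma~\ref{contido} to control containments. The only (immaterial) difference is your choice of radius $s=br$ for Lemma~\ref{rotacao2} where the paper uses $s=\tfrac{b}{2}r$; both lie in the admissible range $\tfrac{b}{2}r\leq s\leq br$, and your verification of the translation property and the derivative bound is just a more detailed spelling-out of what the paper leaves implicit.
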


\begin{proof}
	Consider $F$ given by Lemma \ref{rotacao1} for the ellipse $\mathcal{E}$ and $G$ given by Lemma \ref{rotacao2} for $A=Q$, $\Pi$ the plane of the ellipse and, making attention to Lemma \ref{contido}, $s=\frac{b}{2}r$. Then $H=G\circ F$ satisfies the mentioned conditions.
\end{proof}

At last, we are in a position to prove Theorem~\ref{key:pert2}. We highlight that we construct explicit perturbations which is a novelty when compared with the proof of \cite[Theorem 2.4]{AP}. This is crucial since in the current setting we need to control their derivatives.

\begin{proof} We can suppose that $P_i\neq Q_i$ for all $i$.
	
	{\bf Case 1.} Suppose that there are ellipses $\mathcal{E}_i$ contained in the interior of $I^d$, with vertices $P_i$ and $Q_i$ in such a way that $\mathcal{E}_i\cap \mathcal{E}_j=\emptyset$, for $i\neq j$. It is clear that this always happens if $d\geq 3$. 
	
	As the ellipses are compact  subsets of $\R^d$, there exists $r$, such that $PR^{\mathcal{E}_i}_r$ is contained in the interior of $I^d$, for all $i$, and these pseudo-ring are disjoint. 
	
	For any such $r$, define $F$ as follows. In $PR^{\mathcal{E}_i}_r$, for $i=1,\ldots,N$, $F$ is equal to to the function given by Proposition \ref{translacao} for $\mathcal{E}_i$ and $r$, and equal to the identity outside $\cup_i PR^{\mathcal{E}_i}_r$.

	Note that $\|F-Id\|_\infty$ is less than or equal to the largest of the diameters of $PR^{\mathcal{E}_i}_r$, which is equal to $\max_i\{|P_i-Q_i|\}+2r$, as $P_1$ and $Q_i$ are the vertices of the ellipse.

	On the other hand, for $j=1,\ldots d$,
	\begin{equation*}
		\|(F-Id)_j\|_p = \left(\sum_{i=1}^N\int_{PR^{\mathcal{E}_i}_r}\left|(F-Id)_j\right|^pdx\right)^\frac{1}{p}\leq \|F-Id\|_\infty\sum_{i=1}^N\lambda\left( PR^{\mathcal{E}_i}_r\right)^\frac{1}{p},\\
	\end{equation*}
	and, as for $j,k=1,\ldots d$, $\left|\frac{\partial F}{\partial x_j}(x)\right|\leq \frac{C}{r}$ for some $C>0$, we have
	\begin{align*}
		\left\|\dfx{(F-Id)_j}{x_k}\right\|_p^p = & \sum_{i=1}^N\int_{PR^{\mathcal{E}_i}_r}\left|\dfx{(F-Id)_j}{x_k}\right|^pdx\leq \sum_{i=1}^N\int_{PR^{\mathcal{E}_i}_r}\left(\tfrac{C}{r}+1\right)^pdx\\
		= &\left(\tfrac{C}{r}+1\right)^p\sum_{i=1}^N\lambda\left( PR^{\mathcal{E}_i}_r\right).
	\end{align*}
	
	Therefore, recalling \eqref{volumePT} and that $p<d-1$, we get $\lim_{r\rightarrow 0}\left\|F-Id\right\|_{1,p}=0$.
	
	\vskip 2mm
	
	{\bf Case 2.} Suppose now that $d=2$. If the line segments connecting $P_i$ to $Q_i$ are disjoint, then we are in the first case. Otherwise define, for each $i=1,\ldots,N$, $f_i(t)=P_i+t(Q_i-P_i)$, for $t\in [0,1]$. Consider the set $\mathcal{A}$ of all intersection of those line segments, suppose it is a finite set with $m$ elements and that, for all $t\in [0,1]$,  $f_i(t)\neq f_j(t)$ if $i\neq j$. Let $t_1<\cdots <t_m$ be all the values of $t$ for which $f_i(t_j)\in \mathcal{A}$ for some $i$. Consider $0=s_0\leq t_1<s_1\cdots< s_{m-1}<t_m\leq s_m=1$. 
	
	For fixed $j=0,\ldots,m-1$, the line segments $f_i([s_j,s_{j+1}])$ are disjoint and so we can apply the first case for the families $ \{f_i(s_j))\}_{i=1}^N$ and $\{f_i(s_{j+1})\}_{i=1}^N$. Let $H_j$ be the volume preserving $C^{\infty}$ diffeomorphism obtained from {\bf Case 1.} and $PR^{\mathcal{E}_{i,j}}_{r_j}$, $i=1,\ldots,N$, the  pseudo-ring used in its construction.

	Let us see that $H=H_m\circ\cdots\circ H_1$ satisfies the desired conditions. The ones regarding $\|H-Id\|_\infty$ and $\|H-Id\|_p$ are consequences of the inequalities
	\begin{align*}
		\|H-Id\|_\infty\leq &\sum_{j=1}^m \|H_j-Id\|_\infty\\
		\leq &\sum_{j=1}^m\big(\max_i|P_i-Q_i|(s_{j+1}-s_j)+2r_j\big)\\
		&=
		\max_i|P_i-Q_i|+2\sum_{j=1}^mr_j,
	\end{align*}
	and
	\begin{equation*}
		\|(H-Id)_j\|_p \leq \|H-Id\|_\infty\sum_{i=1}^N\lambda\left( PR^{\mathcal{E}_{i,1}}_{r_j}\right)^\frac{1}{p}.
	\end{equation*}
	
	We will now obtain the control of the norm of the derivative of $H-Id$. To simplify the notation, consider $\mathcal{F}_j=\bigcup_{i=1}^N PR^{\mathcal{E}_{i,j}}_{r_j}$ and $\mathcal{G}_j=\mathcal{F}_j\setminus\bigcup_{l<j}\mathcal{F}_l$
	. There exists $C>0$ such that
	\begin{align*}
		\left\|\dfx{(H-Id)_j}{x_k}\right\|_p^p & =  \sum_{j=1}^m\int_{\mathcal{G}_j}\left|\dfx{(H_m\circ \cdots\circ H_j-Id)_j}{x_k}\right|^pdx\\
		\leq & \sum_{j=1}^m\int_{\mathcal{G}_j}\left(\frac{C}{r_j\cdots r_m}+1\right)^pdx=\sum_{j=1}^m\left(\frac{C}{r_j\cdots r_m}+1\right)^p\lambda({\mathcal{G}_j}).
	\end{align*}
	Therefore, for $r_i$ small enough, there exists $C'>0$, depending only on the data, such that
	
	\begin{equation*}
		\left\|\dfx{(F-Id)_j}{x_k}\right\|_p^p  \leq   C'\sum_{j=1}^m\frac{r_j^{d-1}}{\left(r_j\cdots r_m\right)^p}.
	\end{equation*}
	Choose $c_m,c_{m-1},\ldots,c_1>0$ sequentially such that 
	\begin{equation}\label{susto}
		\left\{
		\begin{array}{l}
			1+c_m<\tfrac{d-1}{p}\\[2mm] 
			1+c_{m-1}+c_{m-1}c_m<\tfrac{d-1}{p}\\[2mm]
			1+ c_{m-2}+c_{m-2}c_{m-1}+c_{m-2}c_{m-1}c_m<\tfrac{d-1}{p}\\ [2mm]
			\vdots \\
			1+ \sum_{t=j}^m\prod_{s=j+1}^tc_s<\tfrac{d-1}{p}\\
			\vdots
		\end{array}\right.
	\end{equation}
	For $r_1$ small enough, define $r_2=r_1^{c_2}$, $r_3=r_2^{c_3}$,\ldots, $r_m=r_{m-1}^{c_m}$, and notice that, for $j=1,\ldots,m$,
	\begin{equation*}
		\frac{r_j^{d-1}}{\left(r_j\cdots r_m\right)^p}=\frac{r_j^{d-1}}{\left(r_jr_j^{c_{j+1}}\cdots r_j^{c_{j+1}\cdots c_m}\right)^p}=r_j^{d-1-p\left( 1+ \sum_{t=j}^m\prod_{s=j+1}^tc_s\right)},
	\end{equation*}
	which converges to $0$, as $r_1$ tends to $0$ considering \eqref{susto}.
	
	If the assumptions about the set $\mathcal{A}$ referred in the beginning of \textbf{Case 2} are not satisfied, then we can consider small perturbations replacing some line segments with 
	two connected line segments such that the new set of points satisfies the conditions. In this case some of the functions $H_i$ will be substituted by a composition of two function of the same type. This argument follows \cite[Theorem 2.4]{AP} with the addition of the control of the derivative, which is unchanged by the perturbations.
\end{proof}

\section{Proof of Volume preserving Sobolev weak Lusin theorem (Theorem~\ref{crucial})}\label{PLusin}

In the present section we obtain a volume preserving weak Lusin theorem for the Sobolev class $\lipp{X}$ with $0<p<1$ (Theorem~\ref{crucial}).  Since any smooth closed connected  $d$-dimensional manifold can be obtained 
from $I^d$ by making boundary identifications, it follows, using a Moser's result~\cite{M}, that the proof of  Theorem~\ref{crucial} reduces to the proof on the unit cube $I^d$.

Firstly, we obtain a control of the partial derivatives of a certain homeomorphism of a cube that will be used afterwards.

\begin{lemma}\label{cubinho}
	Let $\sigma$ be a cube with edges parallel to the coordinates axis and, for $t\in\,]0,1[$ let $\sigma^t$ be the cube concentric with $\sigma$, with faces parallel to it's and such that $\lambda(\sigma^t)=t\lambda(\sigma)$.
	
	For $r,s\in\,]0,1[$ there exists an homeomorphism $T^{\sigma,r,s}:\sigma\rightarrow \sigma$, $C^\infty$ except in the boundary of $\sigma^r$, that is a dilatation from $\sigma^r$ onto $\sigma^s$
	and its inverse is $T^{\sigma,s,r}$.
	
	Furthermore, the determinant of the Jacobian of $T^{\sigma,r,s}$ is equal to $\frac{s}{r}$, in the interior of $\sigma_r$, and is equal to $\frac{1-s}{1-r}$ in  $\sigma\setminus\sigma^r$,  and
	\begin{equation}\label{cubolimitado}
		\forall i,j=1,\ldots, d\quad \forall x \notin \sigma^r \quad \left|\frac{\partial T^{\sigma,r,s}_i(x)}{\partial x_j}\right|\leq\left\{\begin{array}{ll}
			\left(\frac{s}{r}\right)^\frac{1}{d} & \text{if $r<s$}\\[2mm]
			\frac{(1-s)r}{(1-r)s}\left(\frac{s}{r}\right)^\frac{1}{d} & \text{if $s<r$.}
		\end{array}	\right.
	\end{equation}
	In particular, $T^{\sigma,r,s}$ and its inverse are Lipschitz.
\end{lemma}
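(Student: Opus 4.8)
The plan is to realise $T^{\sigma,r,s}$ as a map radial for the sup-norm. After a translation and a homothety — which affect neither the Jacobian determinant, nor the quantities in \eqref{cubolimitado}, nor the dilatation property — we may assume $\sigma=[-1,1]^d$; writing $\rho(x)=\|x\|_\infty=\max_i|x_i|$ we then have $\sigma=\{\rho\le 1\}$ and $\sigma^t=\{\rho\le t^{1/d}\}$ for $t\in\,]0,1[$.

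\emph{Step 1 (radial profile).} Put $c=\tfrac{1-s}{1-r}>0$ and define $\phi=\phi_{r,s}\colon[0,1]\to[0,1]$ by $\phi(\rho)=(s/r)^{1/d}\rho$ on $[0,r^{1/d}]$ and $\phi(\rho)=\big(c\rho^d+s-cr\big)^{1/d}$ on $[r^{1/d},1]$. Since $c\rho^d+s-cr=c(\rho^d-r)+s\ge s>0$ there, $\phi$ is well defined; the two branches agree at $r^{1/d}$ (value $s^{1/d}$); $\phi(0)=0$, $\phi(1)=1$; $\phi$ is strictly increasing and $C^\infty$ on $[0,1]\setminus\{r^{1/d}\}$, hence a homeomorphism of $[0,1]$; and on $[r^{1/d},1]$ it solves $\phi^{d-1}\phi'=c\rho^{d-1}$, i.e.\ $\phi'=c(\rho/\phi)^{d-1}$. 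This profile is forced: on the shell a radial map has Jacobian determinant $(\phi/\rho)^{d-1}\phi'$, and requiring this to be the constant $c$ gives exactly the ODE above.

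\emph{Step 2 (the map, homeomorphism, inverse).} Set $T^{\sigma,r,s}(x)=\tfrac{\phi(\rho(x))}{\rho(x)}\,x$ for $x\neq0$ and $T^{\sigma,r,s}(0)=0$. As $\rho(T^{\sigma,r,s}(x))=\phi(\rho(x))$ and $\phi$ is a homeomorphism of $[0,1]$, the map $T^{\sigma,r,s}$ is a continuous bijection of the compact set $\sigma$, hence a homeomorphism; on $\sigma^r$ it is the dilatation $x\mapsto(s/r)^{1/d}x$ carrying $\sigma^r$ onto $\sigma^s$; and it is the identity on $\partial\sigma$. Solving $c\rho^d+s-cr=y^d$ for $\rho$ and using $\tfrac1c=\tfrac{1-r}{1-s}$ shows $\phi_{r,s}^{-1}=\phi_{s,r}$; since $T^{\sigma,s,r}$ is radial with profile $\phi_{s,r}$, we get $T^{\sigma,r,s}\circ T^{\sigma,s,r}=\mathrm{id}$, so the inverse of $T^{\sigma,r,s}$ is $T^{\sigma,s,r}$.

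\emph{Step 3 (Jacobian and derivative bounds).} On the open set where a single coordinate $x_k$ strictly realises $\rho(x)$ (say $x_k>0$) one has $T_k=\phi(x_k)$ and $T_i=\tfrac{\phi(x_k)}{x_k}x_i$ for $i\neq k$; the Jacobian is block-triangular with $\det DT^{\sigma,r,s}=\big(\tfrac{\phi(\rho)}{\rho}\big)^{d-1}\phi'(\rho)$, which equals $c=\tfrac{1-s}{1-r}$ on $\sigma\setminus\sigma^r$ by Step 1 and equals $s/r$ on $\sigma^r$. Since $|x_i|\le\rho$, each entry satisfies $\big|\tfrac{\partial T^{\sigma,r,s}_i}{\partial x_j}\big|\le\max\{\tfrac{\phi(\rho)}{\rho},\phi'(\rho)\}$. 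On the shell $\tfrac{\phi(\rho)^d}{\rho^d}=c+\tfrac{s-cr}{\rho^d}$ is monotone with endpoint values $s/r$ (at $\rho=r^{1/d}$) and $1$ (at $\rho=1$), whence $\tfrac{\phi}{\rho}$ and $\phi'=c(\rho/\phi)^{d-1}$ are monotone too. If $r<s$ then $\phi'\le c<1\le\tfrac{\phi}{\rho}\le(s/r)^{1/d}$, so every entry is $\le(s/r)^{1/d}$; if $s<r$ then $\tfrac{\phi}{\rho}\le1<c\le\phi'\le c(r/s)^{(d-1)/d}=\tfrac{(1-s)r}{(1-r)s}(s/r)^{1/d}$, so every entry is $\le\tfrac{(1-s)r}{(1-r)s}(s/r)^{1/d}$; this is \eqref{cubolimitado}. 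Finally $DT^{\sigma,r,s}$ is bounded (by $(s/r)^{1/d}$ on $\sigma^r$, as above on the shell) and continuous off the null set $\partial\sigma^r\cup\bigcup_{i\neq j}\{|x_i|=|x_j|\}$, across which $T^{\sigma,r,s}$ stays Lipschitz, so $T^{\sigma,r,s}$ is Lipschitz and by symmetry so is $T^{\sigma,s,r}=(T^{\sigma,r,s})^{-1}$. The only delicate point is the case split in Step 3 producing the sharp constants of \eqref{cubolimitado}; everything else is routine.
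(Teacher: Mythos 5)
Your proposal is correct and follows essentially the same route as the paper: you construct the same explicit sup-norm-radial map (your $c$ is the paper's $A=\tfrac{1-s}{1-r}$, and $c\rho^d+s-cr=c\rho^d+1-c$, so the formulas coincide), compute the Jacobian in the region where one coordinate strictly dominates, and derive the bounds from monotonicity of $\phi/\rho$ and $\phi'$. The only cosmetic difference is that you package all Jacobian entries under the single bound $\max\{\phi/\rho,\phi'\}$, whereas the paper estimates $\lambda'$, $g$, and $g'(x_1)x_i$ separately; both yield the constants in \eqref{cubolimitado}.
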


\begin{proof} 
	Of course we can suppose that $\sigma=[-1,1]^d$. For  $r,s\in\,]0,1[$ and $A=\frac{1- s}{1- r}$ consider the function 
	\begin{equation*}
		\begin{array}{rclc}
			T^{\sigma, r,s}: & [-1,1]^d & \longrightarrow & [-1,1]^d\\
			& x        & \mapsto         & \left\{\begin{array}{cl}
				\frac{\sn}{\rn}\,x & \text{if $\|x\|_\infty\leq \rn$}\\[2mm]
				\frac{\left(A\|x\|_\infty^d+1-A\right)^{\frac{1}{d}}}{\|x\|_\infty}\,x & \text{otherwise.}
			\end{array}\right.
		\end{array} 
	\end{equation*}
	Notice that, for $\|x\|_\infty\geq \rn$, $	\|T^{\sigma,r,s}(x)\|_\infty=\left(A\|x\|_\infty^d+1-A\right)^{\frac{1}{d}}$ increases in  $\|x\|_\infty$ from $\sn$ to $1$. Indeed,
	\begin{equation*}
		\|T^{\sigma,r,s}(x)\|_\infty= \left(A\|x\|_\infty^d+1-A\right)^{\frac{1}{d}}\left\{\begin{array}{l}
			\geq \left(Ar+1-A\right)^{\frac{1}{d}}=\sn\\[2mm]
			\leq \left(A+1-A\right)^\frac{1}{d}=1.
		\end{array}\right.
	\end{equation*}
	
	If $\|x\|_\infty\geq \rn$ then 
	\begin{equation*}
		T^{\sigma,s,r}\left(T^{\sigma,r,s}(x)\right) =	\frac{\Big(\frac{1}{A}\left(A\|x\|_\infty^d+1-A\right)+1-\frac{1}{A}\Big)^{\frac{1}{d}}}{\left(A\|x\|_\infty^d+1-A\right)^{\frac{1}{d}}}\,\frac{\left(A\|x\|_\infty^d+1-A\right)^{\frac{1}{d}}}{\|x\|_\infty}\,x=x.
	\end{equation*}

	From now on we denote $T^{\sigma,r,s}$ by $T$.	For the second part of the proof we only need to consider points whose coordinates are all positive and their first coordinates are strictly bigger than the others. Then, in a neighbourhood of such points,
	\begin{equation*}
		T(x)=\left(\lambda(x_1), g(x_1)x_2,\ldots, g(x_1)x_d\right),
	\end{equation*}
	where $\lambda(x_1)=\left(Ax_1^d+1-A\right)^{\frac{1}{d}}$ and $g(x_1)=\frac{\lambda(x_1)}{x_1}$.

	The entries of the Jacobian of $T$ are then
	\begin{equation*}
		\lambda'(x_1),\ g(x_ 1),\   g'(x_1)x_i,\ \ \text{with $i>1$}.
	\end{equation*}

	Noticing that $ r< s$ if and only if $A<1$ and
	\begin{align*}
		\lambda'(x_1) & =  Ax_1^{d-1}\lambda(x_1)^{1-d}\\
		\lambda''(x_1) & =  (1-A)(d-1)Ax_1^{d-2}\lambda(x_1)^{1-2d}\\
		g'(x_1) & =  \frac{(A-1)\lambda(x_1)^{1-d}}{x_1^2} \\
		(g'(x_1)x_1)' & =  \frac{(1-A)\lambda(x_1)^{1-d}\Big(A(d+1)x_1^{d}\lambda(x_1)^{1-d}+1\Big)}{x_1^2},
	\end{align*} 
	we conclude that $|g'(x_1)|x_1$ is decreasing, $\lambda'$ is increasing if $A<1$ and decreasing if $A>1$, and $g$ is decreasing if $A<1$ and increasing if $A>1$.
	Then, recalling that $|x_1|\leq x_i$, for $i>1$,
	\begin{equation*}
		\left\{
		\begin{array}{ll}
			\lambda'(x_1) & \leq 1\\
			g(x_1)        & \leq  \left(\frac{ s}{ r}\right)^{\frac{1}{d}}\\
			|g'(x_1)x_i|      & \leq  \frac{ s- r}{(1- r)s}\left(\frac{ s}{ r}\right)^{\frac{1}{d}}
		\end{array}
		\right.
		\text{ if $ r< s$,}\qquad\left\{
		\begin{array}{ll}
			\lambda'(x_1) & \leq \frac{(1-s)r}{(1-r)s}\left(\frac{ s}{ r}\right)^{\frac{1}{d}}\\[3mm]
			g(x_1)        & \leq  1\\
			|g'(x_1)x_i|      & \leq \frac{ r-s}{(1- r)s}\left(\frac{ s}{ r}\right)^{\frac{1}{d}}
		\end{array}
		\right.
		\text{ if $ r> s$,}
	\end{equation*}
	from where we obtain the desired inequalities.
	
	Finally, $\det (JT(x))=\lambda'(x_1)g(x_1)^{d-1}=\lambda'(x_1)\frac{\lambda^{d-1}(x_1)}{x_1^{d-1}}=A$, since the Jacobian of $T(x)$ is a lower triangular matrix, 
\end{proof}

We recall that a {\em dyadic permutation of $I^d$ of order $m$} is a bijection $\mathscr{P} \colon I^d \to I^d$ which permutes by simple translation the  dyadic open cubes that are products of intervals of the form $]k/2^m,(k+1)/2^m[$.

\bigskip

\begin{theorem}\label{key:lusin}
	Let 	$d \geq 2$, $0<p<1$ and  $\mathscr{P}$ be a dyadic permutation of the cube $I^d$.	
	Given any $\delta,\gamma>0$, there is 
	$f\in\lip{I^d}$,  equal to the identity on a neighbourhood of the boundary of $I^d$, satisfying 
	\begin{equation*}
		\|f-Id\|_\infty< \|\mathscr{P}-Id\|_\infty+\delta,\quad
		\|D(f-Id)\|_p<\delta, \quad and\quad \lambda\{x \colon \mathscr{P}(x) \neq f(x) \} < \gamma.
	\end{equation*}
	
	Moreover, $f$ is $C^\infty$ except in a set with zero measure.
\end{theorem}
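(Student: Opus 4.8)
The strategy is to realize the dyadic permutation $\mathscr{P}$, which moves finitely many dyadic cubes around by translation, by a genuine Lipschitz volume preserving homeomorphism $f$ that agrees with $\mathscr{P}$ off a set of arbitrarily small measure, while keeping both $\|f-Id\|_\infty$ and $\|D(f-Id)\|_p$ under control. The point is that a translation of a cube cannot be done by a homeomorphism of $I^d$ fixing the boundary, but a translation of a \emph{small ball inside} that cube can, by Theorem~\ref{key:pert2}; so $f$ will coincide with $\mathscr{P}$ only on shrunken concentric subcubes, and the ``error set'' where $f\neq\mathscr{P}$ will be the thin shells between each dyadic cube and its shrunken core.

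First I would set up the bookkeeping. Let $2^{md}=:K$ be the number of dyadic cubes $\{\sigma_1,\dots,\sigma_K\}$ of order $m$, with centres $c_1,\dots,c_K$, and write $\mathscr{P}$ as the permutation sending $\sigma_i$ by translation onto $\sigma_{\tau(i)}$. Fix $t\in\,]0,1[$ close to $1$, to be chosen so that $\sum_i \lambda(\sigma_i\setminus\sigma_i^{\,t}) = (1-t) < \gamma/2$; here $\sigma_i^{\,t}$ is the concentric subcube of Lemma~\ref{cubinho}. The construction of $f$ has two layers. The inner layer: apply Theorem~\ref{key:pert2} to the $K$ source points $\{c_i\}$ and target points $\{c_{\tau(i)}\}$, obtaining, for $r>0$ small, a volume preserving $C^\infty$ diffeomorphism $F$ of $I^d$, equal to the identity near $\partial I^d$ and outside a finite union of thin tubular neighbourhoods of ellipses, which translates the ball $B(c_i,r)$ onto $B(c_{\tau(i)},r)$; moreover $\|F-Id\|_\infty<\max_i|c_i-c_{\tau(i)}|+\delta/2 \le \|\mathscr{P}-Id\|_\infty+\delta/2$ and $\|F-Id\|_{1,p}<\delta/2$, since $p<1<d-1$ (for $d\ge3$) — and for $d=2$ we are still inside the range $0<p<1<d-1$ is false, so here I use $0<p<d-1$ with $d\ge 2$ means $p<1$, which is exactly the hypothesis, so Theorem~\ref{key:pert2} applies. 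The outer layer: on each cube $\sigma_i$, precompose with the radial dilatation $T^{\sigma_i,\,t,\,r'}$ of Lemma~\ref{cubinho} that maps $\sigma_i^{\,t}$ linearly onto the small cube $\sigma_i^{\,r'}$ inscribed in $B(c_i,r)$, and on $\sigma_{\tau(i)}$ postcompose with the inverse dilatation $T^{\sigma_{\tau(i)},\,r',\,t}$ bringing $B(c_{\tau(i)},r)$'s core back out to $\sigma_{\tau(i)}^{\,t}$. Concretely, set
\[
f = \Big(\textstyle\bigsqcup_i T^{\sigma_i,\,r',\,t}\Big)\circ F \circ \Big(\textstyle\bigsqcup_i T^{\sigma_i,\,t,\,r'}\Big),
\]
each factor being the identity near $\partial I^d$ and volume preserving (the two dilatation factors have reciprocal Jacobians $t/r'$ and $r'/t$ that cancel through $F$). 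Then on $\sigma_i^{\,t}$ the composition first shrinks it onto $\sigma_i^{\,r'}\subset B(c_i,r)$, $F$ translates that onto $B(c_{\tau(i)},r)$, and the last factor dilates $\sigma_{\tau(i)}^{\,r'}$ back to $\sigma_{\tau(i)}^{\,t}$ — so $f=\mathscr{P}$ on $\bigcup_i\sigma_i^{\,t}$ up to possibly an extra measure-zero adjustment, which has complement of measure $1-t<\gamma$. This gives the three required properties once I check the norm estimates.

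For the norm estimates: $\|f-Id\|_\infty$. On $\sigma_i^{\,t}$ we have $f(x)=\mathscr{P}(x)$, so $d_X(f(x),x)\le\|\mathscr{P}-Id\|_\infty$; off the cores $f$ stays inside $\sigma_i\cup\sigma_{\tau(i)}$ near where $F$ acts, and by choosing $r$ (hence $r'$) small and $t$ close to $1$ the displacement there is at most $\|\mathscr{P}-Id\|_\infty + (\text{diam of a dyadic cube}) + \delta/2$; shrinking the mesh is not allowed since $\mathscr{P}$ is given, but a more careful bound — the displacement on $\sigma_i\setminus\sigma_i^{\,t}$ is at most $\mathrm{diam}(\sigma_i)<\|\mathscr{P}-Id\|_\infty$ unless $\mathscr{P}$ is trivial on $\sigma_i$, in which case $f=Id$ there — keeps the sup under $\|\mathscr{P}-Id\|_\infty+\delta$. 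For $\|D(f-Id)\|_p$: by the chain rule $Df$ is a product of three Jacobians; the two dilatation factors are Lipschitz with constants bounded via \eqref{cubolimitado} by a constant depending on $t/r'$, and $DF$ is supported on thin tubes of total volume $O(r^{d-1})$ with $|DF|\le C/r$. Using the quasi-triangle inequality \eqref{pq} for $\|\cdot\|_p^p$ and the fact that $T^{\sigma_i,\,t,\,r'}$ has derivative bounded by $(r'/t)^{1/d}\le 1$ where it is a contraction, the contribution of the inner and outer dilatations to $\|D(f-Id)\|_p^p$ is supported on $\bigcup_i(\sigma_i\setminus\sigma_i^{\,t})$ of measure $1-t$, hence $\le C(1-t)$; and the $F$-contribution is $\le C(C/r+1)^p r^{d-1}\cdot K \to 0$ as $r\to 0$ because $p<d-1$, exactly as in the proof of Theorem~\ref{key:pert2}. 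Choosing first $t$ close to $1$ to kill the dilatation part, then $r$ small to kill the $F$-part, yields $\|D(f-Id)\|_p<\delta$. Finally $f$ is $C^\infty$ except on $\bigcup_i\partial\sigma_i^{\,r'}\cup\partial\sigma_i^{\,t}$, a finite union of measure-zero sets.

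The main obstacle — and the only genuinely delicate point — is simultaneously controlling the sup-norm displacement off the cores and the $L^p$-norm of the derivative of the dilatations. The derivative bound \eqref{cubolimitado} blows up as $r'/t\to 0$, so one cannot send $r$ (and with it $r'$) to zero while keeping $t$ fixed without losing control of $\|D(f-Id)\|_p$ on the shells; the resolution is that the \emph{bad} derivative region $\sigma_i\setminus\sigma_i^{\,t}$ has measure $1-t$, independent of $r$, so its $L^p$-contribution is $\big(\text{large constant}(r',t)\big)^p(1-t)$, and one must order the choices correctly: pick $r'$ and $t$ together (e.g. $r'=t^2$ or any relation making the product $(\,\cdot\,)^p(1-t)$ small) to make this term $<\delta^p/2$, and only \emph{then} shrink $r$ within the pseudo-rings — which are interior to $B(c_i,r')$ and can be taken as thin as needed independently — to make the $F$-term $<\delta^p/2$. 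This is the same ``sequential choice of small parameters'' phenomenon as in the system \eqref{susto} in the proof of Theorem~\ref{key:pert2}, and once the order of quantifiers is fixed the estimates are routine.
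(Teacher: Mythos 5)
Your overall strategy is the same as the paper's: construct $F$ by Theorem~\ref{key:pert2} so that it translates small balls around the dyadic centres, then conjugate $F$ by the blockwise dilatation of Lemma~\ref{cubinho} to blow the agreement region $F=\mathscr{P}$ up to a concentric subcube $\sigma_i^\beta$ of measure close to $\lambda(\sigma_i)$, that is, take $f = T\circ F\circ T^{-1}$. However, there are two genuine errors in the way you handle the parameters, and one in the derivative estimate.

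First, your proposed order of quantifiers --- fix $r'$ and $t$ together (e.g. $r'=t^2$), and \emph{then} shrink the pseudo-ring parameter $r$ --- cannot be implemented. The inner dilatation $T^{\sigma_i,t,r'}$ must send $\sigma_i^t$ into the ball $B(c_i,r)$ on which $F$ acts by translation, so $r'$ is forced to be at most of order $r^d$. You cannot fix $r'$ first and shrink $r$ afterwards, because shrinking $r$ shrinks the ball and the inscription $\sigma_i^{r'}\subset B(c_i,r)$ would fail. With your suggested $r'=t^2$ and $t\to 1$, one would need $r'$ close to $1$, forcing $r$ to be comparable to half the side-length of $\sigma_i$, which kills the smallness of $\|F-Id\|_{1,p}$. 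Moreover, your claim that the pseudo-rings ``are interior to $B(c_i,r')$ and can be taken as thin as needed independently'' misreads Theorem~\ref{key:pert2}: a single parameter $r$ controls simultaneously the tube thickness and the radius of the translated ball (see Proposition~\ref{translacao}, where the translated ball has radius $\tfrac{b}{4}r$), and the tubes run from $c_i$ to $c_{\tau(i)}$ across many cubes, so they are not contained in any $B(c_i,r')$. The correct order, as in the paper, is: pick $r$ small (this fixes both the tube estimate and the agreement fraction $\alpha\approx r'$), and only then send $\beta\approx t\to 1$. The saving factor is $(1-\beta)^{1-p}\to 0$, which works precisely because $p<1$.

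Second, your decomposition of $\|D(f-Id)\|_p^p$ into a ``dilatation contribution over $\bigcup_i(\sigma_i\setminus\sigma_i^t)$'' plus an ``$F$ contribution on the tubes'' is not how the chain rule behaves here. Since $Df = DT\cdot DF\cdot DT^{-1}$ with all three factors evaluated at matching points, one has $Df(x)=Id$ (not merely bounded) wherever $T^{-1}(x)$ lies outside $\mathrm{supp}(F-Id)$; hence $D(f-Id)$ is supported on $T(\mathrm{supp}(F-Id))$, not on the full shell $\sigma_i\setminus\sigma_i^t$. Your additive ``dilatation contribution $\le C(1-t)$'' is a phantom term. The paper's computation avoids this by bounding $|Df|$ by $\frac{1}{(1-\beta)\alpha}\,|DF\circ T^{-1}|$ on the shell and then changing variables, obtaining the single bound $\frac{(1-\beta)^{1-p}}{\alpha^p(1-\alpha)}\,d^2\|DF\|_{L^p}^p$; the crucial factor $\|DF\|_{L^p}^p$ appears precisely because $D(f-Id)$ vanishes off the pullback of the tubes. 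Once these two points are corrected --- fix $r$ then push $\beta\to 1$, and keep the $\|DF\|_p^p$ factor through the change of variables --- your outline coincides with the paper's proof.
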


\begin{proof} Without loss of generality, we can suppose that $\mathscr{P}$ is a permutation of dyadic cubes $\sigma_i$, $i=1,\ldots,N$, with diameter less than $\frac{\delta}{3}$ (just increase the order of the dyadic permutation). For $0<\beta<1$, recall that we denote by $\sigma_i^{\beta}$ the cubes concentric to $\sigma_i$ with parallel faces and such that $\lambda(\sigma_i^{\beta}) = \beta \, \lambda(\sigma_i)$.
	We denote by $P_i$ the centre of the cube $\sigma_i$ and let  $Q_i=\mathscr{P}(P_i)$, $i =1, \ldots,N$. Since $\|\mathscr{P}-Id\|_{\infty} < \|\mathscr{P}-Id\|_{\infty}+\frac{\delta}{3}$, applying Theorem~\ref{key:pert2} to the sets $\{P_i\}_{i=1}^{N}$ and $\{Q_i\}_{i=1}^{N}$, we obtain 
	a volume preserving $C^\infty$ diffeomorphism of $I^d$, $F$, 
	equal to the identity on a neighbourhood of	the boundary of $I^d$ and such that
	\begin{equation*}
		\|F-Id\|_\infty <\|\mathscr{P}-Id\|_{\infty}+\tfrac{\delta}{3},\quad \|F-I\|_{1,p}<\tfrac{\delta}{3}.
	\end{equation*}
	Furthermore, since
	$F$ sends a neighbourhood of $P_i$ by translation onto a neighbourhood of $Q_i$,  
	there exists $0<\alpha<1$ such that $F=\mathscr{P}$ on $\sigma_i^{\alpha}$, for every $i$. Consequently, 
	$\lambda\{x : \, \mathscr{P}(x) \neq F(x) \} \leq 1 - \alpha$.
	If $1-\alpha<\gamma$, the theorem is proved taking $f=F$.
	Otherwise, it is clearly enough to obtain a map $f$ which, in addition to satisfying the same conditions, coincides with $\mathscr{P}$ on the cubes $\sigma_i^{\beta}$, for some $\beta$ such that $\beta>1-\gamma$. 
	Fix $\beta >1-\gamma$ and define a map
	$T \colon I^d \to I^d$ (not volume preserving) such that its restriction to each cube $\sigma_i$ is $T^{\sigma_i,\alpha,\beta}$, given by Lemma \ref{cubinho}.

	Finally, we define the map $f:=T\circ F\circ  T^{-1}$. This map satisfies the following:
	\begin{enumerate}
		
		\item[(i)] $\lambda\{x \colon \mathscr{P}(x) \neq f(x) \} < \gamma$, by construction.
		
		\item[(ii)]  $x \in \cup_i \, \text{int}(\sigma_i^\beta)$ if and only if $F(T^{-1}(x)) \in  \cup_i \, \text{int} (\sigma_i^\alpha)$.
		
		\item[(iii)] $f$ is volume preserving. In fact, by~(ii), 
		\begin{equation*}
			\det(J f(x))=\det(JT(F(T^{-1}(x))) \det(JF(T^{-1}(x))\det(JT^{-1}(x))=1,
		\end{equation*}
		except in the boundary of $\sigma_i$, $\sigma_i^\alpha$ and $\sigma_i^\beta$.
	\end{enumerate}

	It is clear that $\|f-Id\|_{\infty} \leq \|F-Id\|_{\infty}+2 \|T-Id\|_{\infty}<\|\mathscr{P}-Id\|_\infty+\delta$. 
	
	As $f=Id$ in $\cup \sigma_i^\beta$ then $\|f-Id\|_p\leq \|f-Id\|_{\infty}(1-\beta)^\frac{1}{p}<\delta$, for $\beta$ large enough.

	Using \eqref{pq} and the fact that $f=Id$ in $\sigma_i^\beta$ we get,
	\begin{equation*}
		\left\|\dfx{(f-Id)_i}{x_j}\right\|_p^p =
		\left\|\dfx{(f-Id)_i}{x_j}\right\|_{L^p(I^d\setminus \cup \sigma_i^\beta))}^p \leq  \left\|\dfx{f_i}{x_j}\right\|_{L^p(I^d\setminus \cup \sigma_i^\beta))}^p	
		+
		\left\|\dfx{Id_{i}}{x_j}\right\|_{L^p(I^d\setminus \cup \sigma_i^\beta))}^p.	
	\end{equation*}
	Notice that
	\begin{equation*}
		\left\|\dfx{Id_{i}}{x_j}\right\|_{L^p(I^d\setminus \cup \sigma_i^\beta)}^p	\leq 	\lambda(I^d\setminus \cup \sigma_i^\beta)= 1-\beta.
	\end{equation*}
	
	On the other hand, using the chain rule and \eqref{cubolimitado} for $T$ and $T^{-1}$, we obtain, for $i,j=1,\ldots,d$,
	\begin{align*}
		\left|\dfx{f_i}{x_j}(x)\right|  \leq &  \frac{(1-\alpha)\beta}{(1-\beta)\alpha}\left(\frac{\alpha}{\beta}\right)^\frac{1}{d}\left(\frac{\beta}{\alpha}\right)^\frac{1}{d}\sum_{k,l=1}^d\left|\dfx{F_k}{x_l}(T^{-1}(x))\right|\\
		\leq & \frac{1}{(1-\beta)\alpha} \sum_{k,l=1}^d\left|\dfx{F_k}{x_l}(T^{-1}(x))\right|.
	\end{align*}
	Then, using \eqref{pq}, (ii), the change of variables defined by $T^{-1}$ and the fact that  $F=\mathscr{P}$ in $\sigma^\alpha_i$, we obtain
	\begin{align*}
		\left\|\dfx{f_i}{x_j}\right\|_{L^p(I^d\setminus \cup \sigma_i^\beta))}^p	& \leq  \frac{1}{(1-\beta)^p\alpha^p}	\sum_{k,l=1}^d\int_{I^d\setminus \cup \sigma_i^\beta} \left|  \dfx{F_k}{x_j}  (T^{-1}(x)) \right|^p\,dx\\ 
		&= \frac{1}{(1-\beta)^p\alpha^p}\frac{1-\beta}{1-\alpha}\sum_{k,l=1}^d  \int_{I^d\setminus \cup \sigma_i^\alpha} \left|  \dfx{F_k}{x_j}(x)\right|^p  \,dx \\
		& \leq \frac{(1-\beta)^{1-p}}{\alpha^p(1-\alpha)}d^2 \left\|DF\right\|_{L^p(I_d)}^p \,.
	\end{align*}

	Hence, taking $\beta$ large enough, the conclusion follows.
\end{proof}

\bigskip

We can now complete the proof of Theorem~\ref{crucial}.

\begin{proof} (of Theorem~\ref{crucial}) 
	Let  $g \in \mathcal{G}_\lambda(I^d)$, $h\in\lipp{I^d}$ and consider $\delta>0$. Let $\mathcal{W}$ be any weak topology neighbourhood of $g$.
	We will obtain $f \in\lip{I^d}$ such that  $f \in \mathcal{W}$ and 
	\begin{equation*}
		\|f-\pi_1(h)\|_\infty < \|g - \pi_1(h)\|_{\infty}+\delta,\qquad 	 \|Df-\pi_2(h)\|_p < \delta.
	\end{equation*}
	Recall that, if $h\in\lip{I^d}$, then $\pi_1(h)=h$ and $\pi_2(h)=Dh$.

	The proof will be divided into three steps.
	
	\medskip
	
	{\bf Step 1.} Assume first that $h=Id$. We only need to consider  $\mathcal{W}$ of the form
	\begin{equation*} 
		\left\{\tilde{g} \in \mathcal{G}_\lambda(I^d): \, \lambda \{x: \, |g(x)-\tilde{g}(x)| \geq \gamma\}<\gamma\right\},\quad\text{for $\gamma>0$}.
	\end{equation*}

	Since the dyadic permutations are dense in $\mathcal{G}_\lambda(I^d)$, the automorphism $g\in \mathcal{G}_\lambda(I^d)$ can be weakly arbitrarily approximated by a dyadic permutation $R$ (see~\cite[Lemma 6.4]{AP}).
	Moreover, $R$ can be weakly approximated by another dyadic permutation $\mathscr{P}$ satisfying:
	\begin{equation}\label{pgp}
		\lambda \{x: \, |g(x)-\mathscr{P}(x)| \geq \gamma\} < \gamma\,\,\,\text{ and }\,\,\,\|\mathscr{P}-Id\|_{\infty} < \|g-Id\|_\infty+\tfrac{\delta}{2}.
	\end{equation}
	The technique for this approximation is described in the proof 
	of~\cite[Theorem 6.2, pp.\,46]{AP}.

	Set $\gamma_0:=\gamma - \lambda \{x : |g(x)-\mathscr{P}(x)| \geq \gamma\}$.
	Applying Theorem~\ref{key:lusin} (with $\frac{\delta}{2}$) to the  permutation $\mathscr{P}$ and using \eqref{pgp} we obtain a map 
	$f \in \lip{I^d}$, with 
	$\|f-Id\|_\infty < \|\mathscr{P}-Id\|_\infty+\frac{\delta}{2}<\|g-Id\|_\infty+\delta$, $\|D(f-Id)\|_p<\frac{\delta}{2}$ and equal to the identity on a neighbourhood of the boundary, satisfying 
	$\lambda\{x \colon \mathscr{P}(x) \neq f(x) \} < \gamma_0$.
	From this last point we also conclude that $f \in \mathcal{W}$.

	\medskip
	
	{\bf Step 2.} Assume now that $h\in \lip{I^d}$ and consider $\mathcal{W}$ a  weak topology neighbourhood  of $g$ in $\mathcal{G}_\lambda(I^d)$.
	
	As $\mathcal{G}_\lambda(I^d)$, with the weak topology, is a topological group then
	$\mathcal{W}_h=\{\varphi\circ h^{-1}:\varphi\in\mathcal{W}\}$ is a weak neighbourhood of $g\circ h^{-1}$ and
	\begin{equation*}
		\|g\circ h^{-1}-Id\|_\infty=\sup_x\|g\left(h^{-1}(x)\right)-h\left(h^{-1}(x)\right)\|_\infty=\|g-h\|_\infty.
	\end{equation*}
	Using {\bf Step 1.}, consider $f^*\in  \lip{I^d}$ (which is in fact $C^\infty$ almost everywhere) such that  $f^*\in \mathcal{W}_h$ and
	\begin{equation*}
		\|f^*-Id\|_{\infty} <  \|g\circ h^{-1} - Id\|_{\infty}+\delta=\|g- Id\|_{\infty}+\delta^*,\quad \|D(f^*-Id)\|_p<\delta^*,
	\end{equation*}
	for $\delta^*>0$ to be defined later.
	
	Consider $f=f^*\circ h$. Notice that $f\in\mathcal{W}$, $f\in\lip{I^d}$, $\|f-h\|_\infty=\|f^*-Id\|_\infty$ and, for $i,j=1,\ldots,d$, and $M=\max_{i,j}\left\|\frac{\partial h_i}{\partial x_j}\right\|_\infty$,
	\begin{equation*}
		\left|\frac{\partial}{\partial x_j} (f-h)_i(x)\right|\leq M\sum_{k=1}^d	\left|\frac{\partial}{\partial x_k} (f^*-Id)_i(h(x))\right|.
	\end{equation*}
	
	Then, as $h$ preserves the volume measure $\lambda$, 	
	\begin{align*}
		\left\|\frac{\partial}{\partial x_j} (f-h)_i\right\|_p^p & \leq  M^p\int\left(\sum_{k=1}^d	\left|\frac{\partial}{\partial x_k} (f^*-Id)_i(h(x))\right|\right)^p\,dx \\
		& =  M^p\int\left(\sum_{k=1}^d	\left|\frac{\partial}{\partial y_k} (f^*-Id)_i(y)\right|\right)^p\,dy\\
		& =  M^p\left\|\sum_{k=1}^d	\left|\frac{\partial}{\partial y_k} (f^*-Id)_i\right|\right\|_p^p.
	\end{align*}
	Therefore, using \eqref{pq}, 
	\begin{equation*}
		\left\|\frac{\partial}{\partial x_j} (f-h)_i\right\|_p^p  \leq  M^p\sum_{k=1}^d	\left\|\frac{\partial}{\partial y_k} (f^*-Id)_i\right\|_p^p \leq  M^pd \|D(f^*-Id)\|_p^p<
		M^pd \left(\delta^*\right)^p.
	\end{equation*}
	The conclusion follows choosing $\delta^*$ conveniently.
	
	\medskip
	
	{\bf Step 3.} Suppose now that $h$ is the limit of a sequence $(h_n)_n$ in $\lip{I^d}$. Let $\delta^*>0$ to be defined later, and  $n_0$ such that $\|\pi_1(h)-h_{n_0}\|_\infty,\|\pi_2(h)-Dh_{n_0}\|_p< \delta^*$. 
	
	Applying {\bf Step 2}, consider $f\in \lip{I^d}$ such that $f\in\mathcal{W}$ and
	\begin{equation*}
		\|f-h_{n_0}\|_\infty <\|g-h_{n_0}\|_\infty+\delta^*,\qquad \|Df-Dh_{n_0}\|_p<\delta^*.
	\end{equation*}
	
	Then, as $\|\pi_1(h)-h_{n_0}\|_\infty <\delta^*$,
	\begin{align*}
		\|f-\pi_1(h)\|_\infty & \leq  \|f-h_{n_0}\|_\infty+\|h_{n_0}-\pi_1(h)\|_\infty\\
		& <  \|g-h_{n_0}\|_\infty+2\delta^*\\
		& \leq   \|g-\pi_1(h)\|_\infty+ \|\pi_1(h)-h_{n_0}\|_\infty+2\delta^*\\
		& < \|g-\pi_1(h)\|_\infty+3\delta^*
	\end{align*}
	and, as $\|\pi_2(h)-Dh_{n_0}\|_p <  \delta^*$, using \eqref{pq},
	\begin{equation*}
		\|Df-\pi_2(h)\|_p^p \leq \|Df-Dh_{n_0}\|_p^p+\|Dh_{n_0}-\pi_2(h)\|_p^p< 2\delta^*
	\end{equation*}
	and the conclusion follows if we choose $\delta^*=\min\left\{\frac{\delta}{3}, \frac{\delta^p}{2}\right\}$.
\end{proof}

\begin{remark}
	The result of Theorem \ref{key:lusin} and, consequently, of Theorem \ref{crucial} is false if $p\geq 1$. To see that, consider any permutation $\mathscr{P}$ different from the identity. Suppose that, for $\delta,\gamma>0$,
	there exists $f\in\lip{I^d}$,  equal to the identity on a neighbourhood of the boundary of $I^d$, satisfying 
	\begin{equation*}
		\|f-Id\|_\infty< \|\mathscr{P}-Id\|_\infty+\delta,\quad
		\|D(f-Id)\|_p<\delta, \quad and\quad \lambda\{x \colon \mathscr{P}(x) \neq f(x) \} < \gamma.
	\end{equation*}
	
	As $f-Id$ equals to zero in the boundary of $I^d$ then,  using the Poincaré inequality, there exists $C$, depending on $d$ and $p$, such that 	$\|f-Id\|_p	\leq C\|D(f-Id)\|_p$. On the other hand 
	\begin{equation*}
		\|\mathscr{P}-Id\|_p\leq \|\mathscr{P}-f\|_p+\|f-Id\|_p\leq 2\gamma^\frac{1}{p}+a\delta,
	\end{equation*}
	which is a contradiction, as we can choose $\delta,\gamma$ as small as we want.
\end{remark}

\section{Proof of Sobolev Oxtoby-Ulam theorem (Theorem \ref{OU})}\label{PMR}

In order to prove the Sobolev Oxtoby-Ulam theorem, we need an auxiliary lemma that is the adaptation of \cite[Lemma 7.2]{AP} to our setting. A crucial ingredient is our version of Lusin Theorem (Theorem~\ref{crucial}). As noticed before the proof will be done in the cube $I^d$.

\begin{lemma}\label{lemma} 
	Let $d\geq 2$, $0<p<1$ and $h\in \lipp{I^d}$. Then, given any $\eps>0$, there exists $f\in \lip{I^d}$ satisfying $\|f-h\|_{1;\infty,p}<\eps$ and such that, for some arbitrarily fine dyadic decomposition $\{D_i\}_{i=1}^N$, there is a compact set $B\subset D_1$ with $\lambda(B)=\frac{1}{2}\lambda(D_1)$ and $f^i(B)$ in the interior of the dyadic cube $D_{i+1}$ for $i=0,\dots, N-1$.
\end{lemma}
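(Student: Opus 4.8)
The plan is to build $f$ by a ``conveyor belt'' argument. First I would fix an arbitrarily fine dyadic decomposition $\{D_i\}_{i=1}^N$ of $I^d$ (finer than needed so that the $D_i$ have small diameter) and enumerate the cubes so that $D_1,D_2,\dots,D_N$ can be visited cyclically. The goal is a conservative Lipschitz homeomorphism $g$ close to the identity in $\wumi$ that realizes the cyclic dyadic permutation $D_i\mapsto D_{i+1}$ on a fixed sub-box; then $f:=g\circ h$ (or, if $h\notin\lip{I^d}$, a small modification) will inherit the required transport property while staying $\eps$-close to $h$. Concretely, inside $D_1$ I would choose the compact set $B$ to be a sub-cube (or a box) concentric with $D_1$ of exactly half its volume, $\lambda(B)=\tfrac12\lambda(D_1)$, with $B$ compactly contained in $\mathrm{int}(D_1)$.

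The key step is to produce, using Theorem~\ref{crucial} (the weak Lusin theorem) together with Theorem~\ref{key:pert2}, a map $f\in\lip{I^d}$ that is $\eps$-close to $h$ in $\|\cdot\|_{1;\infty,p}$ and whose iterates push $B$ forward through the chain $D_1\to D_2\to\cdots\to D_N$. The natural route: consider the automorphism $g_0\in\mathcal{G}_\lambda(I^d)$ obtained from $h$ by composing $\pi_1(h)$ with a measurable bijection that, on a small sub-box of each $D_i$, acts as the translation $D_i\to D_{i+1}$ (such a $g_0$ exists because each $D_i$ has equal volume and we only need to move a sub-box of volume $\tfrac12\lambda(D_1)$ consistently along the chain). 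Apply Theorem~\ref{crucial} to $g_0$ and $h$ with a small $\delta$ and a weak neighbourhood $\mathcal W$ of $g_0$ chosen fine enough that membership $f\in\mathcal W$ forces $\lambda\{x : f(x)\neq g_0(x)\}$ to be tiny — in particular smaller than the volume of the relevant sub-box of $D_i$ at each of the $N-1$ stages. This yields $f\in\lip{I^d}$ with $\|f-\pi_1(h)\|_\infty<\|g_0-\pi_1(h)\|_\infty+\delta$ and $\|Df-\pi_2(h)\|_p<\delta$; since $\|g_0-\pi_1(h)\|_\infty$ can be made as small as we like by taking the dyadic decomposition fine (the translations move points only within a single $D_i$, hence by at most $\mathrm{diam}(D_i)$), this gives $\|f-h\|_{1;\infty,p}<\eps$.

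Then I would carry out the iteration bookkeeping: because $f$ agrees with $g_0$ off a set of measure $<\eta$, and $g_0$ maps the chosen sub-box $B\subset D_1$ by translation into $\mathrm{int}(D_2)$, the set $B_1:=B\setminus f^{-1}(\text{bad set})$ still has almost full measure in $B$ and $f(B_1)\subset\mathrm{int}(D_2)$; iterating and discarding at each of the $N-1$ steps a further set of measure $<\eta$ from the orbit, one is left with a compact subset of $B$ of measure at least $\tfrac12\lambda(D_1)-N\eta$ whose first $N-1$ iterates lie in the successive interiors. Choosing $\eta$ small (which is a matter of shrinking $\mathcal W$) and then, if necessary, slightly enlarging the target volume from $\tfrac12\lambda(D_1)-N\eta$ back up to exactly $\tfrac12\lambda(D_1)$ by passing to a marginally finer decomposition or re-centering $B$, one obtains the stated $B$ with $\lambda(B)=\tfrac12\lambda(D_1)$ and $f^i(B)\subset\mathrm{int}(D_{i+1})$ for $i=0,\dots,N-1$.

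I expect the main obstacle to be the last normalization: Theorem~\ref{crucial} only gives an approximate agreement (a positive-measure ``bad set''), so the transported set one gets for free has measure slightly \emph{less} than $\tfrac12\lambda(D_1)$, and one must recover the exact fraction $\tfrac12$. The clean way around this is to not insist on $B$ being a fixed geometric box from the start: instead run the construction targeting a fraction slightly above $\tfrac12$, absorb the measure losses, and take $B$ to be an appropriate compact subset of the resulting transported-and-pulled-back set; compactness is arranged by a standard inner-regularity argument. A secondary technical point is checking that $f=g\circ h$ (the reduction from general $h\in\lipp{I^d}$ to $h\in\lip{I^d}$) preserves both the closeness estimates and the Lipschitz-homeomorphism property — but this is exactly the content of Steps 2 and 3 in the proof of Theorem~\ref{crucial}, which I would invoke rather than redo.
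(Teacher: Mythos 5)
Your overall strategy --- target an automorphism realizing a cyclic structure on the dyadic cubes, apply Theorem~\ref{crucial} to find a nearby $f\in\lip{I^d}$, then iterate and discard small measure losses before shrinking to hit the exact value $\tfrac12\lambda(D_1)$ --- is the right shape and matches the paper's proof in outline. But there is one genuine gap in the construction of the target automorphism $g_0$.

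You claim $\|g_0-\pi_1(h)\|_\infty$ can be made small because ``the translations move points only within a single $D_i$, hence by at most $\mathrm{diam}(D_i)$.'' This is not so: the translations you compose with send $D_i$ to $D_{i+1}$, which are \emph{different} cubes, so they displace points by roughly the distance between the centres of consecutive cubes in your chosen cyclic order. Making the decomposition finer does not help unless the cyclic ordering is compatible with $\pi_1(h)$ --- for an arbitrary enumeration the displacement can be of order $\mathrm{diam}(I^d)$. The missing ingredient is precisely \cite[Theorem 3.3]{AP}, which produces a cyclic dyadic permutation $\mathscr{P}$ of some fine dyadic decomposition with $\|\mathscr{P}-\pi_1(h)\|_\infty<\eps$; the paper then feeds $g=\mathscr{P}$ directly into Theorem~\ref{crucial} (with $h$), avoiding your detour via $f=g\circ h$ entirely and with it the composition difficulties you flag. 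Without some result of this kind, your argument does not produce a $g_0$ close to $\pi_1(h)$ and the estimate $\|f-h\|_{1;\infty,p}<\eps$ does not follow.

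Two smaller points. Theorem~\ref{key:pert2} is not needed directly here --- it is already absorbed into the proof of Theorem~\ref{crucial}, which is the only perturbation tool required. And for the final normalization, the paper does not ``re-center $B$'' or refine the decomposition; it sets $B^*=(f^{N-1})^{-1}(E_N)$ (compact, of measure $>\tfrac12\lambda(D_1)$) and then applies the intermediate value theorem to $\alpha\mapsto\lambda(B^*\cap D_{1,\alpha})$ to cut $B^*$ down to exactly half of $\lambda(D_1)$. Your inner-regularity idea would also work but is less direct; the concentric-box intersection is the clean route.
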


\begin{proof}
	By \cite[Theorem 3.3]{AP} there is a cyclic dyadic permutation
	$\mathscr{P}$ of some dyadic decomposition $D_1,\ldots,D_N$ with $\mathscr{P}(D_i)=D_{i+1}$, 
	for $i=1,\ldots,N-1$ and $\mathscr{P}(D_N)=D_1$, with $\|\mathscr{P}-\pi_1(h)\|_\infty<\eps$. For $D_i=\prod_{j=1}^d\big]a^i_j-\delta,a^i_j+\delta\big[$, with $\delta=\frac{1}{2\sqrt[d]{N}}$, define, for  $\alpha\in]0,1[$,  $D_{i,\alpha}=\prod_{j=1}^d\big]a^i_j-\alpha\delta,a^i_j+\alpha\delta\big[$. 
	
	By Theorem \ref{crucial}, there exists some $f\in\lip{I^d}$ with $\|f-h\|_{\infty;1,p}<\eps$, arbitrarily close to $\mathscr{P}$ in the weak topology. Therefore, there is a sequence $\left(f_n\right)_n$  in $\lip{I^d}$ with $\|f_n-h\|_{\infty;1,p}<\eps$, which converges to $\mathscr{P}$ in the weak topology. 
	
	In particular, for $a$ to be specified later, and $\mu\in]0,1[$ satisfying 
	$\lambda(D_{1,\mu})>\frac{3}{4}\lambda(D_1)$, there exists $n_0$  such that $\lambda\left(f_{n_0}(D_{i,\mu})\triangle \mathscr{P}(D_{i,\mu})\right)< a$, for all $i=1,\ldots,N-1$. Note that, as $\mathscr{P}(D_{i,\mu})=D_{i+1,\mu}$,  $\lambda\left(f_{n_0}(D_{i,\mu})\setminus D_{i+1,\mu}\right)< a$. 
	
	Let $E_1=D_{1,\mu}$ and, for each $i>1$, $E_i=f(E_{i-1})\cap D_{i,\mu}$. By construction, $E_i$ is contained in the interior of $D_i$,  $\lambda(E_i)> \lambda(D_{1,\mu})-(i-1)a$ and then, choosing $a<\frac{1}{4(N-1)}$, we have $\lambda(E_N)>\frac{1}{2}\lambda(D_1)$. 
	
	Consider $B^*=\left(f^{N-1}\right)^{-1}(E_N)$. Note that $B^*$ is a compact subset of $D_1$ and satisfies $\lambda(B^*)>\frac{1}{2}\lambda(D_1)$ and $f^i(B^*)$ is included in $D_{i+1,\mu}$. 
	
	Define $g:[0,1[\rightarrow \R$ by $g(\alpha)=\lambda(B^*\cap D_{1,\alpha})$, which is a continuous function, since $\big|g(\beta)-g(\alpha)\big|\leq \left|\lambda(D_{1,\beta})-\lambda(D_{1,\alpha})\right|=\left|(2\beta)^N-(2\alpha)^N\right|$, for $\alpha,\beta\in ]0,1[$. In particular, there exists $\alpha^*$ such that $B=B^*\cap D_{1,\alpha^*}$ is a compact subset of $D_1$, such that its measure is equal to $\frac{1}{2}\lambda(D_1)$ and $f^i(B)$ is included in the interior of $D_{i+1}$, for all $i=0,\ldots,N-1$.
\end{proof}

\bigskip

\begin{proof} (of Theorem~\ref{OU})
	We exploit ideas developed in the proof of \cite[Theorem 7.1]{AP}, adapted with the many subtleties of the current Sobolev setting.
	
	Let $\mathcal{E}=\{f\in\ml{I^d}:\text{$f$ is ergodic}\}$ and $\mathcal{E}_\infty=\{f\in \mlp{I^d}: \pi_1(f) \text{ is ergodic}\}$.
	
	In \cite[Theorem 7.1]{AP}, it was proved that there exists $(\mathcal{F}_n)_{n\in\N}$ such that $\ml{I^d}\setminus\mathcal{E}=\cup_n\mathcal{F}_n$ and, for all $n$, $\mathcal{F}_n$ is nowhere dense, that is,
	\begin{equation*}
		\forall h\in\mathcal{F}_n\quad\ \forall \eps>0\quad \exists f\in B^\infty(h,\eps)\ \exists \delta>0:\quad B^\infty(f,\delta)\cap \mathcal{F}_n=\emptyset.
	\end{equation*}
	
	The function $f$ here is the one given by \cite[Lemma 7.2]{AP}, whose version in our context is Lemma \ref{lemma}.\vskip2mm
	
	Let $\mathcal{G}_n=\{h\in\mlp{I^d}:\pi_1(h)\in\mathcal{F}_n\}$. It is clear that $\mlp{I^d}\setminus\mathcal{E_\infty}=\cup_n\mathcal{G}_n$. To prove that $\mathcal{G}_n$ is nowhere dense, consider $h\in \mathcal{G}_n$ an $\eps>0$. Using  Lemma \ref{lemma} and the above, there exists $f\in \lip{I^d}$ and $\delta>0$ such that 
	\begin{equation*}
		\|h-f\|_{1;\infty,p}<\eps,\quad B^\infty(f,\delta)\cap \mathcal{F}_n=\emptyset.
	\end{equation*}
	
	If $g\in \mlp{I^d}$ and $\|g-f\|_{1;\infty,p}<\delta$ then, in particular, $\pi_1(g)\in B^\infty(f,\delta)$, from where we conclude that  $\pi_1(g)\not\in \mathcal{F}_n$, i.e., $g\not\in \mathcal{G}_n$.
\end{proof}

\bigskip

\section{Proof of genericity of topological transitivity (Theorem \ref{Trans})}\label{PTT}

Our proof of Theorem \ref{Trans} will be done in the cube $I^d$ and follows the same steps as \cite[Theorem 4.1]{AP} using as a novelty our Theorem~\ref{key:pert2} where we deal with a finer topology.

Since, in this section $p\geq 1$, an element $f\in\mlp{I^d}$ is identified with $\pi_1(f)$, which is a volume preserving homeomorphism admitting weak partial derivatives in $L^p(I^d)$.

\begin{proof}(of Theorem \ref{Trans})
	Let $\mathcal{Y}$ be the (enumerable) set of the open dyadic cubes of any size and consider, for $A,B\in\mathcal{Y}$,
	\begin{equation*}
		\mathcal{T}_{A,B}=\left\{f\in   \mlp{I^d}\colon f^n(A)\cap B\neq \emptyset\quad\text{for some $n\in\N$} \right\}.
	\end{equation*}
	If we denote by $\mathcal{T}$ the set of all topologically transitive elements of $\mlp{I^d}$ then, as any open set in $I^d$ contains a dyadic cube, 
	\begin{equation*}
		\bigcap_{A,B\in\mathcal{Y}}\mathcal{T}_{A,B}= \mathcal{T}.
	\end{equation*}
	
	As, for $A,B\in\mathcal{Y}$, $\mathcal{T}_{A,B}$ is open (as it is open in the uniform topology), we are left to show that it is also a dense subset of
	$\ml{I^d}$.	To do this, it is enough to consider any $f\in \lip{I^d}$ and $\eps>0$ and find $g\in\mathcal{T}_{A,B}$ such that 
	\begin{equation*}
		\|g-f\|_\infty , \quad \left\|D(g-f)\right\|_p<\eps.
	\end{equation*}
	
	By \cite[Theorem 3.3]{AP} there exists a cyclic dyadic permutation $\mathscr{P}$ of some dyadic decomposition $D_1,\ldots,D_N$, with $\mathscr{P}(D_k)=D_{k+1}$, 
	for $k=1,\ldots,N-1$ and $\mathscr{P}(D_N)=D_1$, with $\|f-\mathscr{P}\|_\infty<\eps/2$. We can also suppose that the size of these dyadic cubes is less than the sizes of $A$ and $B$. Let $c_k$ stand for the centre of $D_k$.
	
	We now apply Theorem~\ref{key:pert2} to the sets $ \{f(c_k)\}_{k=1}^N$ and $\{\mathscr{P}(c_k)\}_{k=1}^N$, obtaining  a $C^{\infty}$ volume preserving diffeomorphism $F$ of $I^d$  which is equal to the identity in a neighbourhood of the boundary of $I^d$, sends by translation a ball centred in $f(c_k)$  onto the ball centred in $\mathscr{P}(c_k)$ and, for $\delta=\delta(\eps)$ to be defined later,
	\begin{equation*}
		\|F-Id\|_\infty < \max_k|f(c_k)-\mathscr{P}(c_k)|+\tfrac{\eps}{2}<\eps,\qquad  \left\|F-Id\right\|_{1,p}<\delta.
	\end{equation*}
	Since $f\in \lip{I^d}$ and $F$ is a  $C^\infty$ volume preserving diffeomorphism we get $g:= F\circ f\in \lip{I^d}$. Moreover, 
	for all $k=1,\dots,N-1$,
	\begin{equation*}
		g(c_k)=(F\circ f) (c_k)=\mathscr{P}(c_k)=c_{k+1},\quad g(c_N)=(F\circ f)(c_N)=\mathscr{P}(c_N)=c_{1}.
	\end{equation*}
	Let $i,j$ be such that $c_i\in A$ and $c_j\in B$.  Then, as $g$ permutes in a cyclical way the points $c_1,\ldots,c_N$, there exists $n\in\N$ such that $g^n(c_i)=c_j$ and then $g^n(A)\cap B\not=\emptyset$. Hence $g\in \mathcal{T}_{A,B}$.\vskip1mm
	
	It is clear that $\|g-f\|_\infty=\|F\circ f-f\|_\infty=\|F-I_d\|_\infty<\eps$.
	
	Finally, to estimate the norm of the partial derivatives of $g-f$ we use the same reasoning as in \textbf{Step 2} of the proof of Theorem \ref{crucial}. As $g-f=(F-I)\circ f$, using the chain rule, we obtain  $\|D(g-f)\|_p\leq M\,\|D(F-I)\|_p<M\delta<\eps$, for a certain $M>0$ depending on $\|Df\|_\infty$, and $\delta=\frac{\eps}{M}$.
\end{proof}

\section{Construction of an example}\label{example}

Now we are in conditions to exhibit $(g,h)\in\mlp{I^d}$, for $0<p<1$, such that $h$ is not the weak derivative of $g$, proving that $\mlp{I^d}$ is not a functional space.\vskip1mm

For $d\geq 2$, we will give an example of a sequence $(f_n)_n$ of measure preserving Lipschitz homeomorphism of $I^d$ converging uniformly to the identity $Id$ and such that the sequence $(Df_n)_n$ converges in $L^p(I^d)^{d^2}$, with $0<p<1$, to $DJ$, where $J:I^d\rightarrow I^d$ is defined by $J(x_1,x_2,x_3,\ldots,x_d)=(1-x_1,1-x_2,x_3,\ldots,x_d)$.\vskip2mm

It is enough to prove that, for all $\delta>0$, there exists $f$, a measure preserving Lipschitz homeomorphisms  of $I^d$, such that 
\begin{equation*}
	\|f-Id\|<\delta, \quad \left\|\dfx{f_i}{x_j}-\dfx{J_i}{x_j}\right\|_p\leq \delta, \quad i,j=1,\ldots,d.
\end{equation*}

It is clear that, if  we have such an $f$ for $d=2$, then $\tilde{f}$ defined as
\begin{equation*}
	\forall (x_1,x_2,x_3,\ldots,x_d)\quad \tilde{f}(x_1,x_2,x_3,\ldots,x_d)=(f(x_1,x_2),x_3,\ldots,x_d),
\end{equation*}
defines the intended function.\vskip2mm

So, let us consider $d=2$. Given $\delta>0$, consider a dyadic partition of $I^2$ such that the $\sigma_i$ squares of the partition have diameter less then $\delta$.
Define $F:[0,1]^2\rightarrow [0,1]^2$ such that, in each square $\sigma_i$ of the partition, $F$ is given by the restriction to $\sigma_i$ of the function $G$ given by Lemma \ref{rotacao2}, for $d=2$, $A$ the centre of $\sigma_i$ and $s=\frac{1}{2}\lambda(\sigma_i)^\frac{1}{2}$.

Consider a square $\sigma_i^\alpha$ inscribed in the circle centred in $A$ and radius $\frac{s}{2}$.
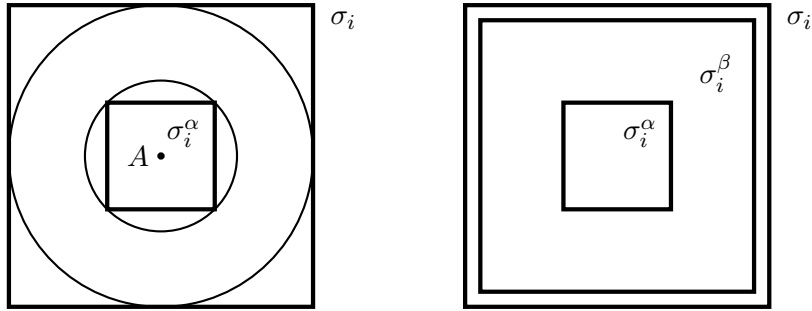
\begin{figure}[htb]
	\begin{center}
		\begin{tikzpicture}
			\draw [ultra thick](0,0) rectangle (4,4);
			\draw [ultra thick](1.2928,1.2928) rectangle (2.7071,2.7071);
			\draw[thick] (2,2) circle (2cm);
			\draw[thick] (2,2) circle (1cm);
			\node at (4.4,3.8) {$\sigma_i$};
			\node at (2.3,2.3) {$\sigma_i^\alpha$};
			\draw[ultra thick] (2,2) circle (0.2mm);
			\node at (1.7,2) {$A$};
				\draw [ultra thick](6,0) rectangle (10,4);
				\draw [ultra thick](7.2928,1.2928) rectangle (8.7071,2.7071);
				\draw [ultra thick](6.2,0.2) rectangle (9.8,3.8);
				\node at (8.3,2.3) {$\sigma_i^\alpha$};
				\node at (9.3,3.1) {$\sigma_i^\beta$};
				\node at (10.4,3.8) {$\sigma_i$};
			\end{tikzpicture}
		\end{center}
		\caption{An illustration of the example.}
		\label{key:exemplo}
	\end{figure}
	
	For $\alpha\leq \beta\leq 1$, let $T_\beta:[0,1]^2\rightarrow [0,1]^2$ be a homeomorphism such that, in each $\sigma_i$, $T^\beta$ is, as in Lemma \ref{cubinho}, a dilatation from $\sigma_i^\alpha$ to $\sigma_i^\beta$ (see Figure \ref{key:exemplo}).
	Finally consider $f=T_\beta\circ F\circ T^{-1}_\beta$.
	
	As $f(\sigma_i)=\sigma_i$, then $\|f-Id\|_\infty<\delta$. 
	
	In what concerns to the derivative, as $f$ is a rotation of an angle of $\pi$ in each $\sigma_i^\beta$, then 
	\begin{equation*}
		\left\|\dfx{f_i}{x_j}-\dfx{J_i}{x_j}\right\|_p^p=	\left\|\dfx{f_i}{x_j}-\dfx{J_i}{x_j}\right\|_{L ^p(I^d\setminus \cup_i\sigma_i^\beta)}^p\leq \left\|\dfx{f_i}{x_j}\right\|_{L ^p(I^d\setminus \cup_i\sigma_i^\beta)}^p+\left\|\dfx{J_i}{x_j}\right\|_{L ^p(I^d\setminus \cup_i\sigma_i^\beta)}^p.
	\end{equation*}
	
	Following the last part of the proof of Theorem \ref{key:lusin}, we get the estimates
	\begin{align*}
		\left\|\dfx{J_{i}}{x_j}\right\|_{L^p(I^d\setminus \cup \sigma_i^\beta)}^p &	\leq 1-\beta,\\[2mm]
		\left\|\dfx{f_i}{x_j}\right\|_{L^p(I^d\setminus \cup \sigma_i^\beta))}^p	& \leq  \frac{1}{(1-\beta)^p\alpha^p} \frac{(1-\beta)^{1-p}}{\alpha^p(1-\alpha)}d^2 \left\|DF\right\|_{L^p(I_d)}^p.
	\end{align*}
	
	Choosing $\beta<1$ large enough we obtain  $\left\|\dfx{f_i}{x_j}-\dfx{J_i}{x_j}\right\|_p<\delta$.
	\bigskip


	
	
\section*{Acknowledgements}
AA, DA and MJT were partially financed by Portuguese Funds through FCT  - `Funda\c{c}\~ao para a Ci\^encia e a Tecnologia' within the research Project UID/00013: Centro de Matem\'atica da Universidade do Minho (CMAT/UM). MB was partially supported by CMUP, which is financed by national funds through FCT-Funda\c c\~ao para a Ci\^encia e a Tecnologia, I.P., under the project with reference UIDB/00144/2020 and also partially funded by the project ``Means and Extremes in Dynamical Systems'' PTDC/MAT-PUR/4048/2021.

\end{document}